\newcommand{\st}{{\textrm{s.t.}}}
\newcommand{\Acal}{{\mathcal{A}}}
\newcommand{\Ical}{{\mathcal{I}}}
\newcommand{\Ncal}{{\mathcal{N}}}
\newcommand{\Scal}{{\mathcal{S}}}
\newcommand{\bx}{\bm{x}}
\newcommand{\be}{\bm{e}}
\newtheorem{theorem}{Theorem}[section]
\newtheorem{lemma}[theorem]{Lemma}
 \theoremstyle{definition}
\newtheorem{definition}[theorem]{Definition}
\theoremstyle{remark}
\providecommand{\keywords}[1]
{
	\small
	\textbf{\textit{Keywords---}} #1
}
\begin{document}
\date{}

\title{An Iteratively Reweighted Method  for Sparse Optimization on Nonconvex $\ell_{p}$ Ball}

\author{Hao~Wang}
\author{Xiangyu~Yang}
\author{Wei~Jiang}
\affil{School of Information Science and Technology, ShanghaiTech University, Shanghai 201210, China}
\affil{\textit {\{wanghao1,yangxy3,jiangwei1\}@shanghaitech.edu.cn}}
\maketitle

% REQUIRED
\begin{abstract}
This paper is intended to solve the nonconvex $\ell_{p}$-ball constrained nonlinear optimization problems. An iteratively reweighted method is proposed, which solves a sequence of weighted $\ell_{1}$-ball projection subproblems.  At each iteration,  the next iterate is obtained by moving along the negative gradient with a stepsize and then projecting the resulted point onto the weighted $\ell_{1}$ ball to approximate the $\ell_{p}$ ball. Specifically, if the current iterate is in the interior of the feasible set, then the weighted $\ell_{1}$ ball is formed by linearizing the $\ell_{p}$ norm at the current iterate.  If the current iterate is on the boundary of the feasible set,  then the weighted $\ell_{1}$ ball is formed differently by keeping those zero components in the current iterate still zero. In our analysis, we prove that the generated iterates converge to a first-order stationary point. Numerical experiments demonstrate the effectiveness of the proposed method.
\end{abstract}

% REQUIRED

\keywords{nonconvex regularization, sparse optimization, iterative reweighting method, weighted $\ell_1$-ball projection}

% REQUIRED
%\begin{AMS}
%	49J50, 49M05, 49M20, 49M37, 65K05, 65K10, 90C26, 90C30
%\end{AMS}

\section{Introduction}\label{Intro_sec}

The use of a bounded norm constraint has received considerable attention in constrained nonlinear programming because it covers a rich set of applications, including machine learning, statistics, compressed sensing, and many other related fields~\cite{das2013non,duchi2008efficient,oymak2017sharp,tibshirani1996regression}. Enforcing a norm ball generally results in sparse solutions of interest,   which can capture the low-dimensional structures of the high-dimensional problems~\cite{kyrillidis2015structured,vidal2016generalized}. These benefits are evidenced in many learning tasks in which sparsity is usually the prior information for the desired solutions~\cite{hastie2015statistical}. In the past decade, people have witnessed significant advances in various norm constrained optimization models and numerical algorithms for solving norm ball constrained optimization. For example, in deep learning applications, weight pruning is a widely used technique in model compression~\cite{deng2020model,wang2020proximal}, whose primary focus is to choose a set of representative weights to balance the compactness and accuracy of the huge-size deep neural networks. Toward this end, researchers often formulated this problem as the convex loss function constrained by a bounded norm (e.g., $\ell_1$ norm and $\ell_0$ norm)~\cite{oymak2018learning}. Likewise, in the context of sparse signal recovery with linear measurements, the unknown signals need to be estimated based on the given sensing matrix and noisy observations. This is usually achieved by solving a least-squares problem with norm balls~\cite{bahmani2013unifying}.

Although the sparse optimization problems constrained by the $\ell_0$ ball or $\ell_1$ ball often arise in numerous contexts~\cite{duchi2008efficient,blumensath2009iterative,condat2016fast,michelot1986finite,rodriguez2018accelerated,zhao2020optimal}, based on the empirical evidence presented in~\cite{oymak2017sharp}, the $\ell_p$ ball constrained problems with $p\in(0,1)$ usually shows its superior performance in many areas including sparse recovery   and phase transition   compared with the $\ell_1$ ball and $\ell_0$ ball constraints. Moreover, the benefits of using nonconvex $\ell_p$ ball in Euclidean projection are revealed in~\cite{das2013non} for image denoising and sparse principal component analysis and    \cite{chen2019outlier} for computer vision. However, until now, to our knowledge, the algorithms proposed for handling the problems constrained by $\ell_p$ ball with $p \in (0,1)$ are limited. This is mainly due to the nonsmooth and nonconvex nature of the feasible set, which makes the analysis challenging and hinders the development of numerical algorithms.

Many research efforts are devoted to designing algorithms for solving nonconvex set constrained minimization problems. For example, projected gradient descent (PGD) is considered for solving general nonconvex constrained optimization problem~\cite{jain2017non}, which requires the projection onto the feasible set for each iteration. As presented in~\cite{jain2017non}, PGD usually needs the global optimal solution of the projection onto the nonconvex set at each iteration. However, this is generally a nontrivial task; as shown in~\cite{yang2021towards},  finding a first-order stationary solution of the $\ell_p$ ball projection problem is NP-hard, let alone compute the global optimal solution. The authors in~\cite{blumensath2009iterative}   analyzed the convergence rates of the PGD  algorithm for solving the $\ell_p$ ball constrained least-squares, for which they also assumed that a practical algorithm could be used to compute the global optimal solution of the $\ell_p$ ball projections for each iteration, making it impractical for real-world applications.

In this paper, we propose an iteratively reweighted $\ell_1$ ball method for solving the minimization problem constrained by the nonconvex $\ell_{p}$ norm ball. The main idea of our approach is to use the weighted $\ell_1$ balls to approximate the $\ell_p$ ball so that our approach iteratively solves a sequence of tractable projection subproblems. At an iterate in the interior of the feasible set, the weighted $\ell_1$ ball is constructed by linearizing the $\ell_p$ norm with respect to the nonzero components in the current iterate and setting the weights associated with the zero components according to the constraint residuals. At an iterate on the boundary of the feasible set, we form the weighted $\ell_1$ ball by linearizing the constraint set for the nonzero components while fixing the rest components as zero.

The idea of using iteratively reweighted norms to approximate $\ell_p$ norm is not novel. However, the existing research efforts were largely devoted to solving the $\ell_p$-norm regularization problems~\cite{Chen2013OptimalityCA,Lu2014IterativeRM,wang2018nonconvex}, meaning the $\ell_p$ norm presents in the objective instead of the constraint.  To our knowledge, there has not been much work on designing iteratively reweighted methods for handling the case where the  $\ell_p$-norm appears in the constraint.   As a particular focus,  the iteratively reweighted $\ell_1$ ball projection algorithm was proposed in~\cite{yang2021towards} for solving the Euclidean projection of a vector onto the nonconvex $\ell_p$ ball.   Their main idea is to use a relaxing vector to smooth the $\ell_p$ norm at each iteration, and then the weighted $\ell_1$ balls are obtained by linearizing the relaxed constraints. Their method requires carefully driving the relaxation parameter to zero to enforce convergence. Our method does not involve a relaxation parameter.

We summarize the contributions of this paper in the following.
\begin{itemize}
	\item[1.]  The problem with $\ell_p$ ball constraint considered in our paper has general form. Investigating such a problem offers the possibility to solve a broader class of objective functions  that can include various applications.
	
	\item[2.] We propose an iteratively reweighted $\ell_1$ ball method for handling the $\ell_{p}$-ball constrained minimization, which is easy to implement and computationally efficient. At each iteration, our algorithm only needs to solve weighted $\ell_1$-ball projection subproblems. The effectiveness of the proposed method is exhibited in the numerical studies via sparse signal recovery and logistic regression problems.
	
	\item[3.] We establish the global convergence analysis and show that the sequence generated by our method converges to the first-order optimal solutions from any feasible initial points.
\end{itemize}

\section{Preliminaries}

Throughout this paper, we use $\mathbb{R}^n$ to denote the Euclidean $n$-space   with the dot product defined by $\langle\bm{x}, \bm{y}\rangle = \bm{x}^{T}\bm{y} =\sum_{i=1}^{n}x_iy_i$.  Moreover, $\circ$ defines a component-wise product between $\bm{x}, \bm{y} \in\mathbb{R}^{n}$, i.e. $\bm{z}=\bm{x}\circ\bm{y}$ with $z_i=x_iy_i$ for $i\in[n]$.    The nonnegative orthant  of $\mathbb{ R}^n$  is denoted as $\mathbb{R}^n_+$ and the interior of $\mathbb{R}^n_{+}$ is denoted as $\mathbb{R}^n_{++}$. For $\bm{x}\in\mathbb{R}^n$, the $i$th component of $\bm{x}$ is represented by $x_i$ and $|\bm{x}|=[|x_1|,\dots,|x_n|]^T$. In particular, we use $\bm{e}$ and $\bm{0}$ to denote the vectors of all ones and zeros of appropriate size, respectively. Moreover, for $p > 0$, the $\ell_p$ (quasi)norm of $\bm{x}\in\mathbb{R}^{n}$ is defined as $\|\bm{x}\|_p=(\sum_{i=1}^n|x_i|^p)^{\frac{1}{p}}$.  Denote $\mathbb{N}$ as the set of natural numbers. We use $[n] \subset \mathbb{N}$ to represent a set containing the natural numbers from $0$ to $n$, i.e., $[n] = \{0,1,2,\dots,n\}$.  The cardinality of an index set $\mathcal{S}\subset \mathbb{ N }$ is denoted by $|\mathcal{ S }|$. We annotate $\bm{x}_{\mathcal{S}} \in \mathbb{R}^{|\mathcal{S}|}$ as the subvector of $\bm{x}$ where the components are indexed by $\mathcal{ S }$. The support set of $\bm{x}\in\mathbb{R}$ is defined as $\Ical(\bm{x}) := \{i \in [n] \mid x_{i} \neq 0\}$, and its complementary set is denoted as $\Acal(\bm{x}) := \{i \in [n] \mid 
x_{i} = 0\}$. The signum function of $x \in \mathbb{R}$ is defined as $\text{sgn}(x) = -1$ if $x<0$, $\text{sgn}(x) = 0$ if $x = 0$ and $\text{sgn}(x) = 1$ if $x > 0$. In addition, for $\bm{x} \in \mathbb{R}^{n}$, $\text{sgn}(\bm{x}) = [\text{sgn}(x_1),\ldots,\text{sgn}(x_n)]^{T}$.

%==================================================
\section{Algorithm description}\label{Sec: Algorithm_description} 

In this paper, we focus on solving the nonlinear optimization problem involving the nonconvex $\ell_p$ ball constraint, which is formalized as
\begin{equation}\label{lp.prob}\tag{\text{$\mathscr{P}$}}
	\begin{aligned}
		\min_{\bm{x}} \quad&f(\bm{x})\\ 
		\st \quad&\bm{x}\in \Omega := \{\bm{x}\in\mathbb{R}^{n}\mid\|\bm{x}\|_p^p\leq  r\},
	\end{aligned}
\end{equation}
where $f:\mathbb{ R }^n\rightarrow\mathbb{ R }$ is twice  continuously differentiable,   $p \in (0,1)$  and $r >0$ is referred to as the radius of the $\ell_p$ ball. 
%It should be emphasized that the objective $f$ considered here is general, so the squared error objective in compressive sensing and the loss function (which may be nonconvex) in the learning problems are included as well.
We know $f$ is (locally) $L$-Lipschitz differentiable on $\Omega$.

Our algorithm alternatively solves two types of subproblems based on whether  the current iterate is on the boundary of $\Omega$. At the $k$th iteration, we approximate the objective $f$ by 
\begin{equation*}
	\phi(\bm{x};\bm{x}^k):= f(\bm{x}^k)+\nabla f(\bm{x}^k)^T(\bm{x}-\bm{x}^k)+\frac{\beta }{2}\|\bm{x}-\bm{x}^k\|_2^2, 
\end{equation*}
with $\beta > L$.    Specifically, if $\|\bx^k\|_p^p = r$,  the $\ell_p$ ball is linearized at $\bx^k$, so that we require the next iterate to satisfy 
\begin{equation*}
	\|\bx^k\|_p^p +   \sum_{ i\in\Ical(\bm{x}^{k}) } p|x_i^k|^{p-1}(|x_i|-|x_i^k|) \le r \quad\text{ and }\quad 
	x_i = 0,\quad i\in\mathcal{ A }(\bm{x}^{k}),
\end{equation*}
or, equivalently, solving the following subproblem for $\bx^{k+1}$, 
\begin{equation}\label{sub.1}\tag{\text{$\mathcal{P}_1$}}
	\begin{aligned} 
		\min_{\bx} & \ \  \phi(\bm{x};\bm{x}^k)\\
		\text{s.t.} &     \sum_{ i\in\Ical(\bm{x}^{k}) }  |x_i^k|^{p-1} |x_i|  \le  r, \\ % \|\bx^k\|_p^p,\\
		& \quad x_i = 0,    i\in\mathcal{ A }(\bm{x}^{k}).
\end{aligned}\end{equation}
If the current iterate is in the interior of $\Omega$, i.e., $\|\bx^k\|_p^p < r$, we use the constraint residual as the perturbation parameter 
to the zero components and then formulate a weighted $\ell_1$ ball by linearizing the $\ell_p$ ball at $\bx^k$. To achieve this,  
letting
\begin{equation}\label{set.epsilon}
	\epsilon^k = c\Big(\tfrac{ r -  \|\bx^k\|_p^p  }{|\Acal(\bx^k)| + 1}\Big)^{\frac{1}{p}},
\end{equation}
\begin{equation}\label{set.rk}
	\begin{aligned}
		r^{k} :=  \tfrac{1}{p}( r + (p-1) \|\bx^k   \|_p^p - | \Acal(\bx^k)|  \epsilon^p )
		> \frac{1-c^p}{p}( r -  \|\bx^k\|_p^p ) >0
	\end{aligned}
\end{equation}
with $c\in(0,1)$. We require the next iterate to satisfy 
\begin{equation*}\label{eq: interior_constraint}
	\begin{aligned}
		\|\bx^k_{\Ical(\bx^k)}   \|_p^p + | \Acal(\bx^k)|  \epsilon^p +  \sum_{ i\in\Ical(\bm{x}^{k}) } p|x_i^k|^{p-1}(|x_i|-|x_i^k|) +\sum_{ i\in\mathcal{ A }(\bm{x}^{k}) } p (\epsilon^k)^{p-1}|x_i|\le r. 
	\end{aligned}
\end{equation*}
In other words, we solve the following subproblem for $\bx^{k+1}$, 
\begin{equation}\label{sub.2}\tag{\text{$\mathcal{P}_2$}}
	\begin{aligned} 
		\min_{\bx} & \quad   \phi(\bm{x};\bm{x}^k)\\
		\text{s.t.} &        \sum_{ i\in\Ical(\bm{x}^{k}) } |x_i^k|^{p-1} |x_i|  \!+\!\sum_{ i\in\mathcal{ A }(\bm{x}^{k}) }  (\epsilon^k)^{p-1}|x_i|\le r^k.
\end{aligned}\end{equation}
We state our proposed  Iterative Reweighted $\ell_1$ Ball Algorithm,  hereinafter named IR1B, in~Algorithm~\ref{alg.l1}.  

\begin{algorithm}[htbp]
	\caption{Iterative Reweighted $\ell_1$ Ball Algorithm (IR1B)}\label{alg.l1}
	\begin{algorithmic}[1]
		\STATE \textbf{Input:}~Choose $\bm{x}^0\in\Omega$, $\beta >\frac{L}{2}$, $c\in(0,1)$ and tolerance $\text{tol} \ge 0$.
		\REPEAT 
		\IF{$\|\bm{x}^k\|_p^p =  r$}
		\STATE Solve \eqref{sub.1} for $\bx^{k+1}$.  
		\ENDIF 
		\IF{$\|\bm{x}^k\|_p^p<r$}
		\STATE  Set $\epsilon^k$ according to \eqref{set.epsilon}. 
		\STATE Solve \eqref{sub.2} for $\bx^{k+1}$. 
		\ENDIF
		\STATE Set $k\gets k+1$. 
		\UNTIL{ $\|\bx^{k+1}-\bx^k\|_2 \le \text{tol}$ }
	\end{algorithmic}
\end{algorithm}

For each iteration of IR1B,  a weighted $\ell_1$ ball projection subproblem needs to be solved.   We mention that there have been many 
efficient $\ell_1$-ball projection algorithms proposed with observed complexity $O(n)$ in the past decades, e.g.,   \cite{condat2016fast,liu2009efficient,rodriguez2017accelerated,zhang2020inexact}.  These algorithms can be modified easily and extended to solve the weighted $\ell_1$-ball projection subproblems here.  We choose the algorithm proposed by~\cite{zhang2020inexact}, modify and use it as our subproblem solver. The details of this algorithm are therefore skipped.  We are aware that the performance of the proposed algorithm can be further improved by carefully designing a more efficient subproblem solver, which, however, is not the primary focus of this paper.

\section{Convergence analysis}\label{Sec: Convergence}

The convergence properties of IR1B are the subject of this section.  We first provide the first-order necessary optimality conditions to characterize 
the optimal solutions of \eqref{lp.prob}, and then prove the well-posedness of the algorithm along with global and local convergence 
results.  

\subsection{First-order optimality condition}\label{sec:3}
To characterize the optimal solution of \eqref{lp.prob}, we introduce the concept of Fr\'echet normal cone \cite{mehlitz2018limiting}, which is a generalization of the normal cone for closed convex sets.
\begin{definition}\label{def:FrechetNormalCone}
	(Fr\'echet normal cone) Given a closed set $\mathcal{X}\subset \mathbb{ R}^n$ and $\bm{x}^*\in \mathcal{X}$, the Fr\'echet normal cone of $\mathcal{X}$ at $\bm{x}^*$ can be defined as
	$$\Ncal_{\mathcal{X}}(\bm{x}^{*}) := \{\bm{\eta}\in \mathbb{ R}^n \mid  \limsup_{\bm{x} \rightarrow \bm{x}^{*}\bm{x} \in \mathcal{X}} \frac{ \langle\bm{\eta}, \bm{x}-\bm{x}^{*}\rangle}{\|\bm{x}-\bm{x}^{*}\|} \leq 0 \}.$$
\end{definition}
\noindent

We now investigate the Fr\'echet normal cone of  the  $\ell_p$ ball at 
$ \bar\bx\in \mathbb{R}^n$. The following theorem characterizes the elements of $\Ncal_{\Omega}(\bar\bx)$, and its proof can be found in~Appendix~\ref{Appendix_1}.

\begin{lemma}\label{thm:FirstOrderNecessaryCondition}  
	If  $\bar{\bm{x}} \in \mathbb{R}^n $ satisfies   $\|\bar{\bm{x}}\|_p^p< r$, 
	then $\Ncal_\Omega(\bar\bx) = \{\bm{0}\}$. 
	If    $\bar{\bm{x}} \in \mathbb{R}^n $ satisfies   $\|\bar{\bm{x}}\|_p^p= r$, 
	then $\bm{\eta}\in\Ncal_{\Omega}(\bar{\bm{x}})$  with  
	\begin{equation*}
		\eta_{i}= \begin{cases} 
			{p |\bar{x}_{i}|^{p-1}\text{sgn}(\bar{x}_i),} & {\text {if } i \in \mathcal{I}(\bar{\boldsymbol{x}})}, \\
			{\eta_{i} \in \mathbb{R},} & {\text {if } i \in \mathcal{A}(\bar{\bm{x}})}. 
		\end{cases}
	\end{equation*}
\end{lemma}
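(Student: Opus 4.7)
My plan is to handle the two cases separately, with the interior case essentially immediate and the boundary case resting on a single Taylor expansion together with a feasibility bookkeeping argument that exploits $p\in(0,1)$.

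For the interior case $\|\bar\bx\|_p^p < r$, continuity of $\bx \mapsto \|\bx\|_p^p$ supplies $\delta > 0$ with the open Euclidean ball of radius $\delta$ around $\bar\bx$ contained in $\Omega$. Given any candidate $\bm{\eta}\neq\bm{0}$, I would instantiate the Fr\'echet definition along $\bx_t := \bar\bx + t\bm{\eta}/\|\bm{\eta}\|_2$, which lies in $\Omega$ for $t\in(0,\delta)$, obtaining $\la \bm{\eta}, \bx_t - \bar\bx\ra/\|\bx_t - \bar\bx\|_2 = \|\bm{\eta}\|_2 > 0$; this contradicts the $\limsup \leq 0$ requirement, so $\Ncal_\Omega(\bar\bx) = \{\bm{0}\}$.

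For the boundary case $\|\bar\bx\|_p^p = r$, fix $\bm{\eta}$ of the asserted form and consider an arbitrary $\bx\in\Omega$ close to $\bar\bx$. I would decompose
\begin{equation*}
\la\bm{\eta},\bx-\bar\bx\ra = \underbrace{\sum_{i \in \Ical(\bar\bx)} p|\bar x_i|^{p-1}\text{sgn}(\bar x_i)(x_i - \bar x_i)}_{(\mathrm{I})} \;+\; \underbrace{\sum_{i \in \Acal(\bar\bx)} \eta_i x_i}_{(\mathrm{II})}
\end{equation*}
and prove that both $(\mathrm{I})$ and $(\mathrm{II})$ are $o(\|\bx - \bar\bx\|_2)$. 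Since $t\mapsto|t|^p$ is smooth at every $\bar x_i$ with $i\in\Ical(\bar\bx)$, a first-order Taylor expansion yields $|x_i|^p = |\bar x_i|^p + p|\bar x_i|^{p-1}\text{sgn}(\bar x_i)(x_i - \bar x_i) + o(\|\bx - \bar\bx\|_2)$. Summing over $i\in\Ical(\bar\bx)$ and invoking feasibility $\|\bar\bx\|_p^p = r \geq \|\bx\|_p^p$ gives
\begin{equation*}
0 \;\leq\; \sum_{i \in \Acal(\bar\bx)} |x_i|^p \;\leq\; -(\mathrm{I}) + o(\|\bx - \bar\bx\|_2),
\end{equation*}
which simultaneously shows $(\mathrm{I}) \leq o(\|\bx - \bar\bx\|_2)$ and $\sum_{i \in \Acal(\bar\bx)}|x_i|^p = O(\|\bx - \bar\bx\|_2)$.

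The remaining step is the crux and also the main obstacle I anticipate. From $\sum_{i\in\Acal(\bar\bx)}|x_i|^p = O(\|\bx - \bar\bx\|_2)$ I would deduce that each individual $|x_i|$ with $i \in \Acal(\bar\bx)$ satisfies $|x_i| \leq C\|\bx - \bar\bx\|_2^{1/p}$, and since $1/p > 1$ this is $o(\|\bx - \bar\bx\|_2)$; finiteness of $\Acal(\bar\bx)$ then forces $(\mathrm{II}) = o(\|\bx - \bar\bx\|_2)$, and the required $\limsup \leq 0$ follows. The delicate point is precisely this control of $(\mathrm{II})$: because the lemma permits $\eta_i$ on $\Acal(\bar\bx)$ to be an arbitrary real, no first-order bound can tame it, and one genuinely needs the super-linear estimate $|x_i| = O(|x_i|^p)^{1/p}$ that only $p\in(0,1)$ supplies. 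This is also the place where the nonconvex geometry of $\Omega$ leaves its fingerprint, since it is exactly the reason the Fr\'echet normals on the inactive coordinates become unconstrained.
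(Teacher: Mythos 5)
Your proof is correct, and on the decisive step it takes a genuinely different route from the paper's. Both arguments share the same skeleton: Taylor-expand $|x_i|^p$ at the nonzero $\bar x_i$, invoke feasibility $\|\bm{x}\|_p^p\le r=\|\bar{\bm{x}}\|_p^p$, and observe that the only real difficulty is the term $\sum_{i\in\Acal(\bar{\bm{x}})}\eta_i x_i$ with arbitrary $\eta_i$. Where you differ is in how that term is killed. The paper keeps the active-set contribution to the feasibility inequality in the form $\sum_{i\in\Acal(\bar{\bm{x}})}|x_i|^{p-1}\mathrm{sgn}(x_i)\,x_i$, folds $\eta_i x_i$ into it, and argues that the combined coefficient $\eta_i\mathrm{sgn}(x_i)-|x_i|^{p-1}$ is eventually negative because $|x_i|^{p-1}\to\infty$ as $x_i\to 0$ (here $p<1$ enters as a blow-up of the weight). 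You instead extract the quantitative consequence $\sum_{i\in\Acal(\bar{\bm{x}})}|x_i|^p=O(\|\bm{x}-\bar{\bm{x}}\|_2)$, hence $|x_i|=O(\|\bm{x}-\bar{\bm{x}}\|_2^{1/p})=o(\|\bm{x}-\bar{\bm{x}}\|_2)$ for each active coordinate (here $p<1$ enters as the super-linear exponent $1/p>1$), which bounds the troublesome sum outright. Your version is slightly stronger in that it yields an explicit decay rate on the active coordinates and makes the whole inner product $o(\|\bm{x}-\bar{\bm{x}}\|_2)$ rather than merely establishing the sign of the $\limsup$; the paper's version is slicker in that it never needs the intermediate $O(\cdot)$ bound. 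You also supply an explicit argument for the interior case ($\Ncal_\Omega(\bar{\bm{x}})=\{\bm{0}\}$ via a feasible line segment in the direction of $\bm{\eta}$), which the paper's appendix omits. The only cosmetic point is that your statement ``$(\mathrm{I})\le o(\|\bm{x}-\bar{\bm{x}}\|_2)$'' should be read as $\limsup_{\bm{x}\to\bar{\bm{x}}}(\mathrm{I})/\|\bm{x}-\bar{\bm{x}}\|_2\le 0$; with that reading every step is sound.
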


\noindent Using   Fr\'echet normal cone   and \cite[Theorem 4.3]{mordukhovich2006frechet}, we have the following 
necessary optimality condition  to \eqref{lp.prob}.

\begin{theorem}[Fermat's rule]\label{thm:optimalSolution}
	If \eqref{lp.prob} has a local minimum at $\bar\bx$, then it holds that
	\begin{equation}\label{eq: Fermat's rule}
		0\in \nabla f(\bar\bx)+\Ncal_{\Omega}(\bar\bx).
	\end{equation}
\end{theorem}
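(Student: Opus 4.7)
The plan is to derive Fermat's rule directly from the definition of the Fr\'echet normal cone (Definition~\ref{def:FrechetNormalCone}) combined with a first-order Taylor expansion of $f$ at $\bar{\bx}$. Rearranging the desired inclusion $0\in\nabla f(\bar\bx)+\Ncal_\Omega(\bar\bx)$, it suffices to show that $-\nabla f(\bar\bx)\in\Ncal_\Omega(\bar\bx)$, i.e.,
\begin{equation*}
\limsup_{\bm{x}\to\bar\bx,\,\bm{x}\in\Omega}\frac{\langle -\nabla f(\bar\bx),\,\bm{x}-\bar\bx\rangle}{\|\bm{x}-\bar\bx\|}\le 0.
\end{equation*}

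First I would invoke the local-minimum hypothesis: there exists $\delta>0$ such that $f(\bm{x})\ge f(\bar\bx)$ for every $\bm{x}\in\Omega$ with $0<\|\bm{x}-\bar\bx\|\le \delta$. Next, since $f$ is twice continuously differentiable on $\Omega$, Taylor's theorem at $\bar\bx$ gives
\begin{equation*}
f(\bm{x})-f(\bar\bx)=\nabla f(\bar\bx)^{T}(\bm{x}-\bar\bx)+o(\|\bm{x}-\bar\bx\|)\qquad \text{as }\bm{x}\to\bar\bx.
\end{equation*}
Combining these two observations yields $\nabla f(\bar\bx)^{T}(\bm{x}-\bar\bx)\ge -o(\|\bm{x}-\bar\bx\|)$ for all $\bm{x}\in\Omega$ sufficiently close to $\bar\bx$. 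Dividing by $\|\bm{x}-\bar\bx\|>0$ and passing to $\limsup$ as $\bm{x}\to\bar\bx$ in $\Omega$ produces exactly the Fr\'echet-normal inequality above, which by Definition~\ref{def:FrechetNormalCone} certifies $-\nabla f(\bar\bx)\in\Ncal_\Omega(\bar\bx)$.

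I do not expect a serious obstacle: the argument does not use any particular structure of $\Omega$ (such as the $\ell_p$-ball description or Lemma~\ref{thm:FirstOrderNecessaryCondition}), only that $\Omega$ is closed and $\bar\bx\in\Omega$, together with the $C^{1}$ smoothness of $f$. The two minor points worth checking in the write-up are that the quotient is well-defined (which is why the $\limsup$ is taken along $\bm{x}\ne\bar\bx$), and that Taylor's little-$o$ remainder can be divided by $\|\bm{x}-\bar\bx\|$ before taking the limit, which is immediate from the definition of $o(\cdot)$. This is precisely the generalized Fermat's rule of \cite[Theorem~4.3]{mordukhovich2006frechet}; the sketch above simply reproduces its proof in our notation.
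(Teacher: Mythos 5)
Your proof is correct. The paper itself does not prove this theorem---it simply invokes the generalized Fermat rule of \cite[Theorem 4.3]{mordukhovich2006frechet}---so your contribution is to supply the standard self-contained argument behind that citation: local optimality gives $f(\bm{x})\ge f(\bar\bx)$ for feasible $\bm{x}$ near $\bar\bx$, differentiability gives $f(\bm{x})-f(\bar\bx)=\langle\nabla f(\bar\bx),\bm{x}-\bar\bx\rangle+o(\|\bm{x}-\bar\bx\|)$, and combining the two and passing to the $\limsup$ over $\bm{x}\to\bar\bx$, $\bm{x}\in\Omega$, $\bm{x}\neq\bar\bx$ yields exactly the defining inequality of $\Ncal_\Omega(\bar\bx)$ for $-\nabla f(\bar\bx)$. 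Your observation that the argument uses nothing about the $\ell_p$-ball structure (in particular, not \Cref{thm:FirstOrderNecessaryCondition}), only that $\bar\bx\in\Omega$ and that $f$ is differentiable at $\bar\bx$, is accurate; the only cosmetic point is that closedness of $\Omega$ is likewise not needed for this direction, it merely makes Definition~\ref{def:FrechetNormalCone} applicable as stated.
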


\noindent We call a point satisfying the necessary optimality condition~\eqref{eq: Fermat's rule} as a first-order optimal solution of~\eqref{lp.prob}.

The optimal solution $\bx^*$ of \eqref{lp.prob} can be classified  into two cases:   $\bx^*$ is on the boundary of the $\ell_p$ ball  or 
$\bx^*$    is in the interior of  the $\ell_p$ ball. 
For the first case, since  the $\ell_p$ norm constraint is active,  we have  from  \Cref{thm:FirstOrderNecessaryCondition} that any 
$\bx$ satisfying the following conditions is first-order optimal 
\begin{subequations}\label{kkt.1}
	\begin{alignat}{2}
		%		\nabla_i f(\bx)&\in\mathbb{ R },i\in \mathcal{ A }(\bx)\label{eq:originkkta}\\
		\nabla_i f(\bx)+\lambda p |x_i|^{p-1}sgn(x_i)&=0,\quad i\in\mathcal{ I }(\bx)\label{eq:subkkttmpab}\\
		\|\bx\|_p^p&= r,\label{eq:subkkttmpac}\\
		\lambda&\ge 0.\label{eq:originkktc}
		%\lambda(\|\bx\|_p^p- r)=0\label{eq:subkkttmpe}.
	\end{alignat}
\end{subequations}
Here \eqref{eq:subkkttmpab} can be equivalently written as
\begin{equation}\label{eq: equivalent_condition}
	\nabla_i f(\bx) x_{i}+\lambda p |x_i|^{p} =0,\quad i\in\mathcal{ I }(\bx).
\end{equation}
%	We can combine the condition \eqref{eq:subkkttmpa} and \eqref{eq:subkkttmpab} by multiplying both sides with $x_i$, yielding
%	\begin{subequations}
%		\begin{alignat}{2}
%		\nabla_i f(\bm{x})x_i+\lambda p|x_i|^{p}&=0,\ i\in[n],\label{eq:originkkta}\\
%		\|\bm{x}\|_p^p&=  r,\label{eq:originkktb}\\
%		\lambda&\ge 0.\label{eq:originkktc}
%		\end{alignat}
%	\end{subequations}
For the second case,  the first-order necessary optimal condition is simply  
\begin{subequations}\label{kkt.2} 
	\begin{alignat}{1}
		\nabla f(\bx)&=0,\label{eq:originkkta_inside}\\
		\|\bx\|_p^p&< r.\label{eq:originkktb_inside}
	\end{alignat}
\end{subequations}
A primal-dual pair $(\bx,\lambda)$ satisfying \eqref{eq:originkkta_inside}-\eqref{eq:originkktb_inside} or \eqref{eq:subkkttmpab}-\eqref{eq:originkktc} is called the first-order optimal solution.

\subsection{Well-posedness}  

We prove that IR1B is well-posed in the sense that each iteration is well-defined.  Our first lemma reveals that  if  $\bx^k$ is feasible, then  the $k$th subproblems \eqref{sub.1} and \eqref{sub.2} 
are all feasible   and the next iterate $\bx^{k+1}$ is also feasible.  
\begin{lemma}\label{lem.well.1} 
	Given any $\bm{x}^k\in\Omega$. Subproblems \eqref{sub.1} and \eqref{sub.2} are all feasible. Moreover, $\|\bx^{k+1}\|_p^p \le r$, meaning 
	$\{\bx^k\} \subset \Omega$. 
\end{lemma}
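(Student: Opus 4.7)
The plan is to prove both assertions by exploiting the concavity of the map $t\mapsto t^p$ on $[0,\infty)$ (for $p\in(0,1)$), together with direct verification that $\bx^k$ itself lies in the feasible region of whichever subproblem applies at iteration $k$. This will simultaneously establish that $(\mathcal{P}_1)$ and $(\mathcal{P}_2)$ are feasible (so $\bx^{k+1}$ is well-defined) and that the original $\ell_p$-ball constraint is preserved.

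First I would treat the feasibility of the subproblems by plugging in the trial point $\bx=\bx^k$. In the boundary case $\|\bx^k\|_p^p = r$, the second constraint of $(\mathcal{P}_1)$ is satisfied trivially since $x_i^k=0$ for $i\in\mathcal{A}(\bx^k)$, and the weighted $\ell_1$ inequality evaluates to $\sum_{i\in\mathcal{I}(\bx^k)}|x_i^k|^{p-1}|x_i^k| = \|\bx^k\|_p^p = r$, i.e.\ equality. In the interior case $\|\bx^k\|_p^p < r$, setting $\bx=\bx^k$ makes the second sum vanish and the first sum equal $\|\bx^k\|_p^p$; feasibility then reduces to $\|\bx^k\|_p^p\le r^k$. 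Rearranging, this is equivalent to $\|\bx^k\|_p^p + |\mathcal{A}(\bx^k)|(\epsilon^k)^p \le r$, and substituting the definition \eqref{set.epsilon} of $\epsilon^k$ yields $|\mathcal{A}(\bx^k)|(\epsilon^k)^p = \frac{c^p|\mathcal{A}(\bx^k)|}{|\mathcal{A}(\bx^k)|+1}(r-\|\bx^k\|_p^p) < r-\|\bx^k\|_p^p$ since $c\in(0,1)$, giving the desired strict inequality.

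Next I would show $\|\bx^{k+1}\|_p^p \le r$ by using the fact that, for $p\in(0,1)$, the tangent line to $t^p$ at any base point overestimates $t^p$. In the boundary case, applying this at each $|x_i^k|$ for $i\in\mathcal{I}(\bx^k)$ and noting $x_i^{k+1}=0$ for $i\in\mathcal{A}(\bx^k)$ gives $\|\bx^{k+1}\|_p^p \le (1-p)\|\bx^k\|_p^p + p\sum_{i\in\mathcal{I}(\bx^k)}|x_i^k|^{p-1}|x_i^{k+1}|$; the constraint of $(\mathcal{P}_1)$ bounds the last sum by $r$, so the right-hand side is at most $(1-p)r+pr = r$. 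In the interior case, I additionally linearize $t^p$ at $t=\epsilon^k$ for the components in $\mathcal{A}(\bx^k)$, obtaining $|x_i^{k+1}|^p \le (1-p)(\epsilon^k)^p + p(\epsilon^k)^{p-1}|x_i^{k+1}|$. Summing both bounds and using the constraint of $(\mathcal{P}_2)$ with the definition \eqref{set.rk} of $r^k$, the telescoping reduces after simplification to $\|\bx^{k+1}\|_p^p \le r - p|\mathcal{A}(\bx^k)|(\epsilon^k)^p \le r$.

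The only delicate point, and the main obstacle, is the interior case: one must track two kinds of linearization (at $x_i^k$ for active indices and at the common perturbation $\epsilon^k$ for zero indices) and verify that the carefully tuned radius $r^k$ and perturbation $\epsilon^k$ absorb all the surplus terms. Everything else is a direct manipulation of the concavity inequality $t^p \le s^p + ps^{p-1}(t-s)$ for $s>0$, $t\ge 0$.
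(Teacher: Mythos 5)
Your proposal is correct and follows essentially the same route as the paper: concavity of $t\mapsto t^p$ (the tangent-line overestimate at $|x_i^k|$ for $i\in\Ical(\bx^k)$ and at $\epsilon^k$ for $i\in\Acal(\bx^k)$) combined with the subproblem constraint and the definitions of $\epsilon^k$ and $r^k$. The only difference is cosmetic — you verify feasibility explicitly by plugging in $\bx=\bx^k$ where the paper calls it obvious, and your interior-case bound $\|\bx^{k+1}\|_p^p\le r-p|\Acal(\bx^k)|(\epsilon^k)^p$ is marginally sharper than needed.
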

\begin{proof} Obviously,  \eqref{sub.1} and \eqref{sub.2} are all feasible.  
	Suppose   \eqref{sub.1}  is solved at the $k$th iteration. By concavity of $(\cdot)^p$, we have 
	\begin{equation*}
		\begin{aligned}
			\|\bx^{k+1}\|_p^p = \| \bx^{k+1}_{\Ical(\bx^k)}\|_p^p \le \|\bx^k_{\Ical(\bx^k)}\|_p^p + \sum_{i\in \Ical(\bx^k)}p|x_i^k|^{p-1}(|x_i^{k+1}| - |x_i^k|) \le r.
		\end{aligned}
	\end{equation*}
	Suppose \eqref{sub.2} is solved at the $k$th iteration.
	% \[ \sum_{i\in \Ical(\bx^k)}p|x_i^k|^{p-1}(|x_i^{k+1}| - |x_i^k|) +  \sum_{i\in \Acal(\bx^k)}p\epsilon^{p-1}(|x_i^{k+1}| - |x_i^k|)  < r.\]
	%Since $\bx^k$ is feasible for \eqref{sub.2},    
	%\eqref{sub.2} is also feasible. 
	By concavity, 
	\begin{equation*}
		\begin{aligned} 
			&\|\bx^{k+1}   \|_p^p
			\le  \|\bx^{k+1}_{\Ical(\bx^k)}   \|_p^p +  \| |\bx^{k+1}_{\Acal(\bx^k)} | +  \epsilon\be\|_p^p \\ 
			\le& \|\bx^k_{\Ical(\bx^k)}   \|_p^p +  \|| \bx^k_{\Acal(\bx^k)}| +  \epsilon\be\|_p^p + \!\sum_{i\in \Ical(\bx^k)}p|x_i^k|^{p-1}(|x_i^{k+1}| \!-\! |x_i^k|)\\
			&+ \! \sum_{i\in \Acal(\bx^k)}p\epsilon^{p-1}(|x_i^{k+1}| \!-\! |x_i^k|)  \\ 
			= & \|\bx^k_{\Ical(\bx^k)}   \|_p^p +  |\Acal(\bx^k)| \epsilon^p + \sum_{i\in \Ical(\bx^k)}p|x_i^k|^{p-1}(|x_i^{k+1}| - |x_i^k|)\\ 
			&+  \sum_{i\in \Acal(\bx^k)}p\epsilon^{p-1}|x_i^{k+1}|   
			\le  \   r.   
		\end{aligned}
	\end{equation*}
\end{proof}

For ease of notation, denote $\Ical^{k} := \Ical(\bm{x}^{k})$ and $\Acal^{k} := \Acal(\bm{x}^{k})$. Notice that the Slater conditions holds for \eqref{sub.1} and \eqref{sub.2}.  
For \eqref{sub.1}, the new iterate $\bx^{k+1}$ satisfies the following     Karush-Kuhn-Tucker (KKT) conditions of \eqref{sub.1}  
\begin{subequations}\label{kkt.sub1}
	\begin{alignat}{1}
		&\nabla_i f(\bm{x}^k)\!+\!\beta  (x_i^{k+1} -x_i^k)\!+\!\lambda^{k+1} p|x_i^k|^{p-1} \xi_i^{k+1}  =0, \ i\in \Ical^k\label{eq:sub_inside_kkt_a}\\
		&\lambda^{k+1}( \sum_{ i\in \mathcal{I}(\bm{x}^k)} p|x_i^k|^{p-1}|x_i^{k+1}| -  r ) =0, \label{eq:subkktc} \\
		&\sum_{ i\in \mathcal{I}(\bm{x}^k)}   |x_i^k|^{p-1}  |x_i^{k+1}| \le  r,\\
		&\lambda^{k+1}\ge 0,\quad\xi_i^{k+1} \in \partial |x_i^{k+1}|,
	\end{alignat}
\end{subequations} 
where $\lambda^{k+1}$ is the multiplier associated with the weighted $\ell_1$ ball constraint. Clearly, $\lambda^{k+1}$  satisfies
\begin{equation}\label{eq:lambdak1}
	\lambda^{k+1} = -\frac{\sum\limits_{i\in\mathcal{I}(\bm{x}^k)} [x_i^{k+1}\nabla_i f(\bm{x}^k)+\beta  x_i^{k+1} (x_i^{k+1}-x_i^k)] } { p \sum\limits_{i\in\Ical(\bx^k)} | x_i^k|^{p-1}|x_i^{k+1}|}.
\end{equation}
As for \eqref{sub.2}, the new iterate $\bx^{k+1}$ satisfies 
\begin{subequations}\label{kkt.sub2}
	\begin{alignat}{2}
		&\nabla_i f(\bm{x}^k)+\beta  (x_i^{k+1} -x_i^k) + \lambda^{k+1} p|x_i^k|^{p-1} \xi_i =0, \ i\in \Ical^k \\
		&\nabla_i f(\bm{x}^k)+\beta  (x_i^{k+1} -x_i^k) + \lambda^{k+1} p(\epsilon^k)^{p-1} \xi_i =0, \ i\in \Acal^k \\
		&\lambda^{k+1}(  \sum_{ i\in\Ical(\bm{x}^{k}) } |x_i^k|^{p-1} |x_i^{k+1}|  +\sum_{ i\in\mathcal{ A }(\bm{x}^{k}) }  (\epsilon^k)^{p-1}|x_i^{k+1}| -  r^k ) =0, \\
		&\sum_{ i\in\Ical(\bm{x}^{k}) }  |x_i^k|^{p-1} |x_i^{k+1}|  +\sum_{ i\in\mathcal{ A }(\bm{x}^{k}) }   (\epsilon^k)^{p-1}|x_i^{k+1}|\le r^k,\\
		&\lambda^{k+1}\ge 0,\quad	\xi_i^{k+1} \in \partial |x_i^{k+1}|, 	  
	\end{alignat}
\end{subequations} 
where $\lambda^{k+1}$ is the multiplier associated with the weighted $\ell_1$ ball constraint. Clearly, $\lambda^{k+1}$  satisfies
%	\begin{equation} 
%		\lambda^{k+1} = -\frac{\sum_{i\in\mathcal{I}(\bm{x}^{k+1} )} [\nabla_i f(\bm{x}^k)+\beta  (x_i^{k+1}-x_i^k)] } {\sum_{i\in\mathcal{I}(\bm{x}^{k+1})} p|x_i^k|^{p-1} \text{sgn}(x_i^{k+1})}
%	\end{equation}
\begin{equation}\label{eq:lambdak2}
	\lambda^{k+1} = -\frac{\sum_{i=1}^n [x_i^{k+1}\nabla_i f(\bm{x}^k)+\beta  x_i^{k+1} (x_i^{k+1}-x_i^k)] }{ p \sum\limits_{i\in\Ical(\bx^k)} | x_i^k|^{p-1}|x_i^{k+1}| + p\epsilon^{p-1} \sum\limits_{i\in\Acal(\bx^k)}|x_i^{k+1}|}.
\end{equation}

The next lemma enumerates relevant properties of subproblem \eqref{sub.1} and \eqref{sub.2}.  It states that 
as long as $\bx^k$ is not stationary for the $k$th subproblem, the next iterate $\bx^{k+1}$ will cause a reduction for $f$ from $\bx^k$. 

\begin{lemma}\label{lem.well.2}  During the $k$th iteration of IR1B, we have the following: 
	\begin{enumerate}
		\item[(a)] If $\bx^{k+1} = \bx^k$ after solving \eqref{sub.1}, then $\bx^k$ satisfies  conditions \eqref{eq:subkkttmpab}-\eqref{eq:originkktc}. 
		\item[(b)] If $\bx^{k+1} = \bx^k$ after solving \eqref{sub.2}, then $\bx^k$ satisfies conditions \eqref{eq:originkkta_inside}-\eqref{eq:originkktb_inside}.
	\end{enumerate}
\end{lemma}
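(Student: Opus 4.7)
The plan is to read off both statements directly from the KKT systems \eqref{kkt.sub1} and \eqref{kkt.sub2}, using $\bx^{k+1}=\bx^k$ to kill the quadratic proximal term and to pin down the subdifferentials.

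For part (a), since \eqref{sub.1} is only solved when $\|\bx^k\|_p^p=r$, condition \eqref{eq:subkkttmpac} is already in force. In \eqref{eq:sub_inside_kkt_a}, the term $\beta(x_i^{k+1}-x_i^k)$ vanishes under $\bx^{k+1}=\bx^k$; moreover for $i\in\Ical^k$ we have $x_i^k\neq 0$, hence $x_i^{k+1}\neq 0$ and the subgradient $\xi_i^{k+1}\in\partial|x_i^{k+1}|$ collapses to the singleton $\text{sgn}(x_i^k)$. What remains is exactly $\nabla_i f(\bx^k)+\lambda^{k+1}p|x_i^k|^{p-1}\text{sgn}(x_i^k)=0$ for $i\in\Ical^k$, i.e.\ \eqref{eq:subkkttmpab} with multiplier $\lambda=\lambda^{k+1}$, while $\lambda^{k+1}\ge 0$ is part of \eqref{kkt.sub1} and supplies \eqref{eq:originkktc}. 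This part is essentially bookkeeping.

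For part (b), since \eqref{sub.2} is solved only when $\|\bx^k\|_p^p<r$, condition \eqref{eq:originkktb_inside} is automatic, and it suffices to establish \eqref{eq:originkkta_inside}. The key step—and the only nontrivial one—is to show that the weighted $\ell_1$ constraint in \eqref{sub.2} is \emph{strictly} inactive at $\bx^{k+1}=\bx^k$, so that complementary slackness forces $\lambda^{k+1}=0$. Evaluating the left-hand side at $\bx^k$ gives $\sum_{i\in\Ical^k}|x_i^k|^{p-1}|x_i^k|+\sum_{i\in\Acal^k}(\epsilon^k)^{p-1}\cdot 0=\|\bx^k\|_p^p$, while from \eqref{set.rk} and the choice \eqref{set.epsilon},
\begin{equation*}
r^k-\|\bx^k\|_p^p \;=\; \tfrac{1}{p}\bigl(r-\|\bx^k\|_p^p-|\Acal^k|(\epsilon^k)^p\bigr)\;=\;\tfrac{r-\|\bx^k\|_p^p}{p}\cdot\tfrac{|\Acal^k|+1-|\Acal^k|c^p}{|\Acal^k|+1}\;>\;0,
\end{equation*}
because $c\in(0,1)$ and $r-\|\bx^k\|_p^p>0$. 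Thus the constraint is slack at $\bx^k$, so $\lambda^{k+1}=0$. Plugging $\lambda^{k+1}=0$ and $\bx^{k+1}=\bx^k$ into the two stationarity equations of \eqref{kkt.sub2} immediately yields $\nabla_i f(\bx^k)=0$ on both $\Ical^k$ and $\Acal^k$, which is \eqref{eq:originkkta_inside}.

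The main obstacle—though a mild one—is the strict-inactivity computation in part (b); it is precisely the reason \eqref{set.epsilon} and \eqref{set.rk} were chosen with the factor $c\in(0,1)$. Once that quantitative gap $r^k>\|\bx^k\|_p^p$ is in hand, the remainder is direct substitution into the subproblem KKT systems.
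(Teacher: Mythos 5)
Your proof is correct and follows essentially the same route as the paper's: plug $\bx^{k+1}=\bx^k$ into the subproblem KKT systems \eqref{kkt.sub1} and \eqref{kkt.sub2}, and in case (b) verify that the weighted $\ell_1$ constraint is strictly inactive at $\bx^k$ (your computation $r^k-\|\bx^k\|_p^p=\tfrac{1}{p}(r-\|\bx^k\|_p^p)\tfrac{|\Acal^k|+1-|\Acal^k|c^p}{|\Acal^k|+1}>0$ is the same slackness estimate the paper carries out in the un-divided form of the constraint), so that $\lambda^{k+1}=0$ and stationarity reduces to $\nabla f(\bx^k)=\bm{0}$. Your part (a) is in fact slightly more explicit than the paper's, spelling out that $\xi_i^{k+1}$ collapses to $\mathrm{sgn}(x_i^k)$ on $\Ical^k$ and that $\|\bx^k\|_p^p=r$ supplies \eqref{eq:subkkttmpac}.
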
 

\begin{proof} (a) If $\bx^{k+1} = \bx^k$ after solving \eqref{sub.1}, plugging $\bx^{k+1} = \bx^k$ into \eqref{kkt.sub1},  we have $\bx^k$ satisfies 
	\eqref{kkt.1}. Therefore, $\bx^k$ is first-order optimal for \eqref{lp.prob}. 
	
	(b) If $\bx^{k+1} = \bx^k$ after solving \eqref{sub.2}, we plug $\bx^{k+1} = \bx^k$ into \eqref{kkt.sub2}. The constraint is inactive at $\bx^k$, since 
	\[ \begin{aligned}
		&\ \|\bx^k_{\Ical(\bx^k)}   \|_p^p + | \Acal(\bx^k)|  \epsilon^p +  \sum_{ i\in\Ical(\bm{x}^{k}) } p|x_i^k|^{p-1}(|x_i^k|-|x_i^k|) +\sum_{ i\in\mathcal{ A }(\bm{x}^{k}) } p (\epsilon^k)^{p-1}|x_i^k|\\
		= &\ \|\bx^k   \|_p^p + | \Acal(\bx^k)|  \epsilon^p 
		=  \  \|\bx^k   \|_p^p + | \Acal(\bx^k)|  c^p \big(\tfrac{ r -  \|\bx^k\|_p^p  }{|\Acal(\bx^k)|+1}\big)\\
		< & \  \|\bx^k   \|_p^p +    \big( r -  \|\bx^k\|_p^p  \big) = r. 
	\end{aligned} \]
	Therefore, $\lambda^{k+1} = 0$ and   \eqref{kkt.sub2} now reverts to $\nabla f(\bx^k) = \bm{0}$, meaning $\bx^k$ is first-order optimal 
	for \eqref{lp.prob}. 
\end{proof}

Our main theorem in this subsection summarizes the well-posedness of IR1B, which 
can be derived trivially using \Cref{lem.well.1}  and \Cref{lem.well.2}. 

\begin{theorem}\label{thm.well}  Assume the tolerance in IR1B is set as $\text{tol} = 0$.   During the iteration of IR1B, one of the following must occur  
	\begin{enumerate}
		\item[(a)] IR1B terminates after solving subproblem \eqref{sub.1} with $(\bx^{k+1}, \lambda^{k+1})$ satisfying \eqref{kkt.1}. 
		\item[(b)] IR1B terminates after solving subproblem \eqref{sub.2} with $\bx^{k+1}$ satisfying \eqref{kkt.2}. 
		\item[(c)] IR1B generates an infinite sequence $\{(\bx^k, \lambda^k)\}$, where, for all $k$, 
		$\{\bx^k\}\subset \Omega$ and $\{\lambda^k\}\subset \mathbb{R}_+$. 
	\end{enumerate} 
\end{theorem}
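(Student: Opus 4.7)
The plan is a straightforward case analysis keyed on the stopping criterion, assembling the two preceding lemmas. First I would record the invariants that must hold along every run of IR1B. By \Cref{lem.well.1}, starting from $\bx^0\in\Omega$, each execution of \eqref{sub.1} or \eqref{sub.2} produces a feasible next iterate, so an easy induction gives $\{\bx^k\}\subset\Omega$ throughout. Next I would note that for every $k$ the multiplier $\lambda^{k+1}$ coming out of the KKT systems \eqref{kkt.sub1} or \eqref{kkt.sub2} is nonnegative by construction. These subproblems are convex with a single weighted $\ell_1$-ball constraint, Slater's condition holds (the origin is strictly feasible since $r,r^k>0$), so the KKT conditions, including $\lambda^{k+1}\ge 0$, are both necessary and sufficient at the subproblem minimizer; the explicit formulas \eqref{eq:lambdak1} and \eqref{eq:lambdak2} are thus well-defined and produce nonnegative values. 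Together these two invariants give the feasibility and dual-sign statements needed for case (c).

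With the invariants in hand, the proof splits into two exhaustive cases depending on whether the stopping test $\|\bx^{k+1}-\bx^k\|_2\le 0$, i.e., $\bx^{k+1}=\bx^k$, is ever triggered. If it is never triggered, the algorithm runs forever, producing an infinite sequence $\{(\bx^k,\lambda^k)\}$ that lies in $\Omega\times\mathbb{R}_+$ by the invariants above; this is case (c). Otherwise, the test fires at some finite iteration $k$, and I would apply the appropriate branch of \Cref{lem.well.2}: if the terminating iteration solved \eqref{sub.1}, part (a) of \Cref{lem.well.2} shows that $\bx^k$ (together with $\lambda^{k+1}$) satisfies \eqref{eq:subkkttmpab}--\eqref{eq:originkktc}, which is case (a); if it instead solved \eqref{sub.2}, part (b) of \Cref{lem.well.2} shows that $\bx^k$ satisfies \eqref{eq:originkkta_inside}--\eqref{eq:originkktb_inside}, which is case (b).

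There is essentially no technical obstacle: the substantive work, namely maintaining primal feasibility through the two kinds of linearization and showing that a fixed point of either subproblem solves the corresponding branch of the first-order optimality conditions for \eqref{lp.prob}, has already been done in \Cref{lem.well.1} and \Cref{lem.well.2}. The only point deserving emphasis in the write-up is that the three cases are genuinely exhaustive and mutually exclusive, which follows directly from the algorithmic logic (at every iteration exactly one of \eqref{sub.1} or \eqref{sub.2} is solved, and the stopping test is evaluated immediately after) combined with the observation that once $\bx^{k+1}=\bx^k$ occurs the \textbf{repeat} loop exits, so no further iterate is produced.
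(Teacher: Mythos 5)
Your proposal is correct and follows exactly the route the paper intends: the paper itself states that \Cref{thm.well} ``can be derived trivially using \Cref{lem.well.1} and \Cref{lem.well.2},'' and your write-up simply makes that derivation explicit (feasibility by induction via \Cref{lem.well.1}, nonnegativity of the multipliers from the subproblem KKT systems, and the exhaustive case split on whether the stopping test $\bx^{k+1}=\bx^k$ ever fires, dispatched by the two parts of \Cref{lem.well.2}). No gaps; this matches the paper's argument.
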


\subsection{Global convergence} 

We now prove properties related to the global convergence of IR1B under the assumption that an infinite sequence of iterates 
is generated; i.e., we focus on the situation described in \Cref{thm.well}(c).  We first show that 
the objective of \eqref{lp.prob} is monotonically decreasing. 
For this purpose, we define the decrease in $\phi(\cdot ; \bx^k)$ caused by $\bx^{k+1}$ from $\bx^k$ as 
\[ \Delta \phi(\bx^{k+1}; \bx^k) = \phi(\bx^k; \bx^k) - \phi(\bx^{k+1}; \bx^k).\]
We first show this reduction vanishes as $k\to\infty$. 

\begin{lemma}\label{lem.delta.0}
	Suppose $\{\bx^k\}$ is generated by IR1B. It holds true that 
	\[f(\bx^k) -  f(\bx^{k+1}) \ge    \Delta \phi(\bx^{k+1}; \bx^k).\]
	Therefore, $\{f(\bx^k)\}$ is monotonically decreasing and $\lim\limits_{k\to\infty}  \Delta \phi(\bx^{k+1}; \bx^k)  = 0$. 
\end{lemma}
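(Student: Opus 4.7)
The plan is to establish the inequality $f(\bx^k)-f(\bx^{k+1}) \ge \Delta\phi(\bx^{k+1};\bx^k)$ by sandwiching $\phi(\bx^{k+1};\bx^k)$ between $f(\bx^{k+1})$ and $f(\bx^k)$. Concretely, I would combine two ingredients: (i) the standard descent lemma applied to $f$, which uses $L$-Lipschitz continuity of $\nabla f$ on $\Omega$ together with $\beta \ge L$, yielding
\begin{equation*}
f(\bx^{k+1}) \ \le\ f(\bx^k)+\nabla f(\bx^k)^{T}(\bx^{k+1}-\bx^k)+\tfrac{\beta}{2}\|\bx^{k+1}-\bx^k\|_2^2 \ =\ \phi(\bx^{k+1};\bx^k);
\end{equation*}
and (ii) the optimality of $\bx^{k+1}$ for its own subproblem, which (once I know $\bx^k$ is itself feasible in that subproblem) gives $\phi(\bx^{k+1};\bx^k) \le \phi(\bx^k;\bx^k) = f(\bx^k)$. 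Subtracting, $f(\bx^k)-f(\bx^{k+1}) \ge f(\bx^k)-\phi(\bx^{k+1};\bx^k) = \Delta\phi(\bx^{k+1};\bx^k) \ge 0$.

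The one step that needs explicit verification is that $\bx^k$ is a feasible point of each subproblem, since otherwise I cannot compare $\phi(\bx^{k+1};\bx^k)$ with $\phi(\bx^k;\bx^k)$. For \eqref{sub.1} this is immediate because $\sum_{i\in\Ical^k}|x_i^k|^{p-1}|x_i^k|=\|\bx^k\|_p^p=r$ and $x_i^k=0$ on $\Acal^k$ by definition. For \eqref{sub.2} I would substitute $\bx=\bx^k$ into the constraint to get $\sum_{i\in\Ical^k}|x_i^k|^{p-1}|x_i^k| + \sum_{i\in\Acal^k}(\epsilon^k)^{p-1}|x_i^k| = \|\bx^k\|_p^p$, then verify $\|\bx^k\|_p^p \le r^k$. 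Using the definition \eqref{set.epsilon} of $\epsilon^k$, one has $|\Acal^k|(\epsilon^k)^p = \tfrac{c^p|\Acal^k|}{|\Acal^k|+1}(r-\|\bx^k\|_p^p) < r-\|\bx^k\|_p^p$, and a short rearrangement of the definition of $r^k$ then shows $\|\bx^k\|_p^p \le r^k$, giving feasibility with strict slack.

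For the limit assertion, I would invoke compactness of $\Omega$ (closed and bounded since $\|\bx\|_p^p\le r$ forces $|x_i|\le r^{1/p}$) together with continuity of $f$ to conclude that $f$ is bounded below on $\Omega$. By \Cref{lem.well.1}, $\{\bx^k\}\subset\Omega$, so the monotone decreasing sequence $\{f(\bx^k)\}$ converges and $f(\bx^k)-f(\bx^{k+1})\to 0$. The sandwich $0\le\Delta\phi(\bx^{k+1};\bx^k)\le f(\bx^k)-f(\bx^{k+1})$ then forces $\Delta\phi(\bx^{k+1};\bx^k)\to 0$.

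The main obstacle is the feasibility check for \eqref{sub.2}: the algebra involving $\epsilon^k$, $r^k$, and $c\in(0,1)$ must be arranged so that $\bx^k$ lies in the (linearized, perturbed) feasible region of \eqref{sub.2}, and this is the only place where the precise choice of $\epsilon^k$ and $r^k$ really enters. The rest of the argument is a standard majorization-minimization descent proof, provided the ambiguity between "$\beta>L$" in the text and "$\beta>L/2$" in Algorithm~\ref{alg.l1} is resolved in favor of the condition needed for the descent lemma.
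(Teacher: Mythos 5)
Your proposal is correct and follows essentially the same majorization--minimization descent argument as the paper: the descent lemma with $\beta>L$ gives $f(\bx^{k+1})\le\phi(\bx^{k+1};\bx^k)$, and summing the resulting telescoping bound (with $f$ bounded below on the compact set $\Omega$) gives the limit. You are in fact slightly more careful than the paper, which leaves the nonnegativity of $\Delta\phi(\bx^{k+1};\bx^k)$ (via feasibility of $\bx^k$ for its own subproblem) implicit, and you correctly flag the $\beta>L$ versus $\beta>L/2$ inconsistency.
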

\begin{proof}
	% We first consider the case that \eqref{sub.1} is solved at the $k$th iteration and then examine the decrease caused by $\bx^{k+1}$ in the objective $f$.  
	Since $f$ is locally Lipschitz differentiable  on $\Omega$ and $\beta  > L$, we have 
	\[ \begin{aligned}
		f(\bx^{k+1})\
		&\le\ f(\bx^k) \!+\! \langle \nabla f(\bx^k), \bx^{k+1} - \bx^k\rangle \!+\! \frac{L}{2}\|\bx^{k+1} - \bx^k\|_2^2 \\ 
		&<\ f(\bx^k) \!+\! \langle \nabla f(\bx^k), \bx^{k+1} - \bx^k\rangle \!+\! \frac{\beta}{2}\|\bx^{k+1} - \bx^k\|_2^2 \\ 
		&=\ f(\bx^k) - \Delta \phi(\bx^{k+1}; \bx^k),
	\end{aligned}
	\]
	then we have 
	\begin{equation}\label{eq: reduction_bound}
		\Delta \phi(\bx^{k+1}; \bx^k) \le  f(\bx^k) - f(\bx^{k+1}).
	\end{equation}
	Rearranging, this leads to the desired result. On the other hand, summing both sides of~\eqref{eq: reduction_bound} from $t=0$ to $k$ gives
	\begin{equation}\label{eq: summing}
		\begin{aligned}
			\sum_{t=0}^{k} \Delta \phi(\bx^{t+1}; \bx^t)&\leq   \sum_{t=0}^{k}(f(\bx^t) - f(\bx^{t+1})) = f(\bm{x}^{0}) - f(\bm{x}^{t+1})\\ &\leq f(\bm{x}^{0}) - \underline{f}< +\infty
		\end{aligned}
	\end{equation}
	with $\underline{f}:= \inf\limits_{\bm{x} \in\Omega} f(\bm{x})$. Letting $t \to \infty$, we have
	\begin{equation*}
		\lim_{k\to\infty} \Delta \phi(\bm{x}^{k+1};\bm{x}^{k}) = 0. 
	\end{equation*}
\end{proof}
The next lemma shows that the displacement of the iterates vanishes in the limit.
\begin{lemma}\label{lem. iterates_dimishes}
	Suppose $\{\bx^k\}$ is generated by~Algorithm~\ref{alg.l1}. Then
	\begin{equation}\label{eq: iterates_dimishes}
		\lim\limits_{k\to \infty} \Vert\bm{x}^{k+1} - \bm{x}^{k}\Vert_{2} = 0.
	\end{equation}
\end{lemma}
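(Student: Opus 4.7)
The plan is to combine the already-proven fact that $\Delta\phi(\bx^{k+1};\bx^k)\to 0$ (Lemma~\ref{lem.delta.0}) with a lower bound of the form $\Delta\phi(\bx^{k+1};\bx^k)\ge \tfrac{\beta}{2}\|\bx^{k+1}-\bx^k\|_2^2$ that comes out of the $\beta$-strong convexity of the model $\phi(\cdot;\bx^k)$. Since the Hessian of $\phi(\cdot;\bx^k)$ in its first argument is $\beta I$, the map $\bx\mapsto \phi(\bx;\bx^k)$ is $\beta$-strongly convex; because the feasible regions of both subproblems~\eqref{sub.1} and~\eqref{sub.2} are convex polytopes, the standard strong-convexity-at-the-minimum inequality applied at the minimizer $\bx^{k+1}$ over any feasible point $\bx$ gives $\phi(\bx;\bx^k)-\phi(\bx^{k+1};\bx^k)\ge \tfrac{\beta}{2}\|\bx-\bx^{k+1}\|_2^2$.

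To apply this inequality at $\bx=\bx^k$, I need to know that $\bx^k$ is feasible for the subproblem actually solved at iteration $k$. If \eqref{sub.1} is solved, then $\|\bx^k\|_p^p=r$, and $\sum_{i\in\Ical(\bx^k)}|x_i^k|^{p-1}|x_i^k|=\sum_{i\in\Ical(\bx^k)}|x_i^k|^p=\|\bx^k\|_p^p=r$, while the linear equalities $x_i=0$ for $i\in\Acal(\bx^k)$ are trivially satisfied. If \eqref{sub.2} is solved, a short computation using~\eqref{set.epsilon}--\eqref{set.rk} shows
\[
r^k-\|\bx^k\|_p^p=\tfrac{1}{p}\bigl(r-\|\bx^k\|_p^p-|\Acal(\bx^k)|\epsilon^p\bigr)=\tfrac{1}{p}\bigl(r-\|\bx^k\|_p^p\bigr)\Bigl(1-\tfrac{c^p|\Acal(\bx^k)|}{|\Acal(\bx^k)|+1}\Bigr)>0,
\]
since $c^p<1$ and $\|\bx^k\|_p^p<r$ in this branch; hence $\bx^k$ is (strictly) feasible for~\eqref{sub.2}.

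Given feasibility of $\bx^k$, choosing $\bx=\bx^k$ in the strong-convexity inequality yields $\Delta\phi(\bx^{k+1};\bx^k)=\phi(\bx^k;\bx^k)-\phi(\bx^{k+1};\bx^k)\ge \tfrac{\beta}{2}\|\bx^{k+1}-\bx^k\|_2^2$. Letting $k\to\infty$ and invoking Lemma~\ref{lem.delta.0} then gives the desired conclusion $\|\bx^{k+1}-\bx^k\|_2\to 0$. I do not anticipate any substantive obstacle; the only mildly delicate step is the feasibility check for~\eqref{sub.2}, which is a direct unpacking of the definitions of $\epsilon^k$ and $r^k$ together with the choice $c\in(0,1)$.
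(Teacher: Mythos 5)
Your proof is correct and follows essentially the same route as the paper: both hinge on the inequality $\Delta\phi(\bx^{k+1};\bx^k)\ge\tfrac{\beta}{2}\|\bx^{k+1}-\bx^k\|_2^2$ combined with \Cref{lem.delta.0}. The only difference is that the paper obtains this inequality by citing an external result, whereas you derive it directly from the $\beta$-strong convexity of $\phi(\cdot;\bx^k)$ over the convex feasible set together with the feasibility of $\bx^k$ (your feasibility check for~\eqref{sub.2} is the same computation already carried out in \Cref{lem.well.2}(b)), which makes the argument self-contained.
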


\begin{proof} 
	By~\cite[Theorem 3.7]{zhang2020inexact}, we have $\Vert\bm{x}^{k+1} - \bm{x}^{k}\Vert_{2}^2 \leq (2/\beta) \Delta \phi(\bm{x}^{k+1};\bm{x}^{k})$. Therefore, we have $\lim\limits_{k\to \infty} \Vert\bm{x}^{k+1} - \bm{x}^{k}\Vert_{2} = 0$ by~\Cref{lem.delta.0}, as desired.
\end{proof}

Define two subsequences based on whether   \eqref{sub.1}  or \eqref{sub.2} is solved, 
\[ \Scal_1 = \{ k \mid \|\bx^k\|_p^p = r\}\ \text{ and } \ \Scal_2 = \{ k \mid \|\bx^k\|_p^p < r\}.\] 
We are now ready to provide the global convergence result for IR1B.  

\begin{theorem}\label{thm.global}
	Let $\{ (\bm{x}^{k},\lambda^k) \}$ be generated by~Algorithm~\ref{alg.l1}. Then, every cluster point $(\bx^*, \lambda^*)$ of $\{ (\bm{x}^{k},\lambda^k) \}$ is first-order optimal for \eqref{lp.prob}.
\end{theorem}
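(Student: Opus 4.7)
The plan is to fix a cluster point $(\bx^*, \lambda^*)$ with subsequence $\{(\bx^{k_j}, \lambda^{k_j})\} \to (\bx^*, \lambda^*)$ and pass to the limit in the KKT systems \eqref{kkt.sub1} or \eqref{kkt.sub2} for the subproblem solved at iteration $k_j - 1$, noting that $\bx^{k_j - 1} \to \bx^*$ by \Cref{lem. iterates_dimishes} and $\bx^* \in \Omega$ by \Cref{lem.well.1}. After further subsequencing, pigeonhole reduces us to two cases: either every $k_j - 1 \in \Scal_1$ (Case A) or every $k_j - 1 \in \Scal_2$ (Case B).

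In Case A, continuity gives $\|\bx^*\|_p^p = r$. For $i \in \Ical(\bx^*)$, since $x_i^{k_j-1} \to x_i^* \ne 0$, we have $i \in \Ical(\bx^{k_j-1})$ for $j$ large, and $\xi_i^{k_j} = \text{sgn}(x_i^{k_j}) \to \text{sgn}(x_i^*)$. Taking $j \to \infty$ in \eqref{eq:sub_inside_kkt_a} together with $\lambda^{k_j} \ge 0 \Rightarrow \lambda^* \ge 0$ yields exactly \eqref{eq:subkkttmpab}–\eqref{eq:originkktc}, hence \eqref{kkt.1}.

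Case B splits according to whether $\|\bx^*\|_p^p = r$ or $\|\bx^*\|_p^p < r$. When $\|\bx^*\|_p^p = r$, formula \eqref{set.epsilon} forces $\epsilon^{k_j-1} \to 0$, so the perturbation in the subproblem vanishes and the same argument as in Case A applied to the first block of \eqref{kkt.sub2} again yields \eqref{kkt.1}. When $\|\bx^*\|_p^p < r$, the quantity $r - \|\bx^{k_j-1}\|_p^p$ is uniformly bounded below, so by \eqref{set.epsilon}–\eqref{set.rk}, $(\epsilon^{k_j-1})^{p-1}$ stays bounded and $r^{k_j-1} - \|\bx^{k_j-1}\|_p^p \ge \frac{1-c^p}{p}(r - \|\bx^{k_j-1}\|_p^p)$ stays uniformly positive. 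Combined with $\|\bx^{k_j} - \bx^{k_j-1}\|_2 \to 0$ from \Cref{lem. iterates_dimishes}, this should imply that the constraint in \eqref{sub.2} is eventually inactive at $\bx^{k_j}$, so complementary slackness gives $\lambda^{k_j} = 0$ for $j$ large; the remaining equations of \eqref{kkt.sub2} then reduce to $\nabla f(\bx^{k_j-1}) + \beta(\bx^{k_j} - \bx^{k_j-1}) = \bm{0}$, and passing to the limit yields $\nabla f(\bx^*) = \bm{0}$, which combined with $\|\bx^*\|_p^p < r$ gives \eqref{kkt.2}.

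The main obstacle will be the strict-interiority step in the sub-case $\|\bx^*\|_p^p < r$: the weights $|x_i^{k_j-1}|^{p-1}$ in the constraint of \eqref{sub.2} may blow up for indices $i \in \Acal(\bx^*) \cap \Ical(\bx^{k_j-1})$, so the uniform feasibility gap at $\bx^{k_j-1}$ does not immediately transfer to $\bx^{k_j}$ via a naive Lipschitz estimate along the small displacement $\bx^{k_j} - \bx^{k_j-1}$. I expect to handle this by further subsequencing so that the integer-valued sets $\Ical(\bx^{k_j-1})$ and $\Acal(\bx^{k_j-1})$ stabilize, and then by controlling the potentially blowing-up weighted displacements using the KKT equations themselves, which couple $\lambda^{k_j}$ with the small terms $|x_i^{k_j-1}|^{p-1}\xi_i^{k_j}$ in a compensating way; a similar care is needed in Case B with $\|\bx^*\|_p^p = r$ when controlling indices where $x_i^{k_j-1}$ drifts to zero through nonzero values.
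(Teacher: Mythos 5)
Your Case A and the boundary sub-case of Case B are sound. The boundary sub-case essentially reproduces the paper's own argument for that situation (its case (ii)-(b)), which is likewise a direct limit passage in \eqref{kkt.sub2} after observing $\Ical(\bx^*)\subset\Ical(\bx^{k_j-1})$ for large $j$. For Case A the paper argues differently --- it supposes the subproblem formed at $\bx^*$ has a strictly better point than $\bx^*$ and shows a nearby feasible point of the $k$th subproblem would then beat $\bx^{k+1}$ by a fixed margin, contradicting $\Delta\phi(\bx^{k+1};\bx^k)\to 0$ from \Cref{lem.delta.0} --- but your direct KKT limit is legitimate there, because every weight $|x_i^{k_j-1}|^{p-1}$ with $i\in\Ical(\bx^*)$ stays bounded and \eqref{kkt.1} imposes no condition on the coordinates in $\Acal(\bx^*)$.

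The genuine gap is the interior sub-case of Case B, and it is exactly the one you flag without resolving. The uniform slack $r^{k_j-1}-\|\bx^{k_j-1}\|_p^p\ge\tfrac{1-c^p}{p}(r-\|\bx^{k_j-1}\|_p^p)$ together with $\|\bx^{k_j}-\bx^{k_j-1}\|_2\to 0$ does \emph{not} force the constraint of \eqref{sub.2} to be inactive at $\bx^{k_j}$: for an index $i\in\Acal(\bx^*)\cap\Ical(\bx^{k_j-1})$ the weight $|x_i^{k_j-1}|^{p-1}$ diverges, and a coordinate displacement of order $|x_i^{k_j-1}|^{1-p}$ --- which tends to $0$ and is therefore compatible with \Cref{lem. iterates_dimishes} --- already contributes an $O(1)$ increase to the weighted constraint value. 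Hence $\lambda^{k_j}$ need not vanish, and your fallback of exploiting the KKT coupling does not obviously close the argument: for such an index the stationarity equation only pins down the product $\lambda^{k_j}p|x_i^{k_j-1}|^{p-1}\xi_i^{k_j}$, which can converge to a nonzero limit even when $\lambda^{k_j}\to 0$, so you cannot conclude $\nabla_i f(\bx^*)=0$ for those coordinates, nor for the remaining ones without first controlling $\lambda^{k_j}$. This is precisely where the paper switches mechanisms: it assumes the subproblem \eqref{sub.2} formed at $\bx^*$ (with $\epsilon^*>0$) admits a point $\tilde\bx$ with $\phi(\tilde\bx;\bx^*)\le\phi(\bx^*;\bx^*)-\delta$, transports $\tilde\bx$ to a feasible point of the $k$th subproblem, and obtains a decrease of at least $\delta/4$ in $\phi(\cdot;\bx^k)$, contradicting \Cref{lem.delta.0}; then \Cref{lem.well.2}(b) converts ``$\bx^*$ solves its own subproblem'' into \eqref{kkt.2}, with no multiplier estimates at all. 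To complete your route you would need either to adopt that mechanism for this sub-case or to supply the missing quantitative bound on $\lambda^{k_j}|x_i^{k_j-1}|^{p-1}$ along the coordinates drifting to zero through nonzero values.
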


\begin{proof}
	By \Cref{lem.well.2},  if $\bx^*$ is a limit point of $\Scal_1$, it suffices to show that the subproblem \eqref{sub.1} at $\bx^*$ has a stationary point $\bx^*$;   if $\bx^*$ is a limit point of $\Scal_2$, it suffices to show that the subproblem  \eqref{sub.2} at $\bx^*$ has a stationary point $\bx^*$.  
	We prove this by contradiction for two cases.  
	
	Case (i): assume by contradiction that there exists a limit point  $\bx^*$ of $\Scal_1$ such that at $\bx^*$ the subproblem \eqref{sub.1}  has 
	a stationary point $\hat\bx \ne \bx^*$ and that  
	\begin{equation}\label{eq: Delta_p*}
		\Delta  \phi(\hat\bx; \bx^*) = \phi(\bx^*; \bx^*) - \phi(\hat\bx; \bx^*) > \delta > 0.
	\end{equation}
	Consider a subsequence $\hat\Scal_1 \subset \Scal_1$ such that $\{\bx^k\}_{\hat\Scal_1} \to \bx^*$.  
	Notice by \Cref{lem.delta.0}, there exists $k_0 \in \mathbb{N}$ such that 
	\[ \Delta \phi(\bx^{k+1}; \bx^k) = \phi(\bx^k; \bx^k) - \phi(\bx^{k+1}; \bx^k) < \delta/4,\] 
	or, equivalently, 
	\begin{equation}\label{delta is small} 
		\phi(\bx^{k+1}; \bx^k) > \phi(\bx^k; \bx^k) - \delta/4
	\end{equation} 
	for all $k > k_0$, $k\in \hat\Scal_1$. 
	To derive a conclusion contradicting \eqref{delta is small}, first of all, notice that 
	$\phi(\bx^k; \bx^k) - \phi(\bx^*; \bx^*)  = f(\bx^k) - f(\bx^*)$. 
	By the continuity of $f$, there exists $k_1$ such that for all $k > k_1$, $k\in \hat\Scal_1$
	\begin{equation}\label{eq: Delta_p^k}
		|\phi(\bx^k; \bx^k) - \phi(\bx^*; \bx^*)|  < \delta/4.
	\end{equation}
	Denote $\hat{\bx}^k$ as the projection of $\hat{\bx}$ onto the feasible region of~\eqref{sub.1} at $\bx^k$. By the continuity of $\phi$, there exists $k_2$ such that for all $k > k_2$, $k \in \hat\Scal_1$,
	\begin{equation}\label{eq: Delta_hat_x}
		\begin{aligned}
			|\phi(\hat{\bx}^k;\bm{x}^*) - \phi(\hat{\bx}^k;\bm{x}^{k})| < \delta / 4\quad\text{and}\quad |\phi(\hat{\bx}^k;\bm{x}^*) - \phi(\hat{\bx};\bm{x}^{*})| < \delta/4.
		\end{aligned}
	\end{equation}
	Combining~\eqref{eq: Delta_p*},~\eqref{delta is small},~\eqref{eq: Delta_p^k} and~\eqref{eq: Delta_hat_x}, for any $k > \max\{k_{0}, k_{1}, k_{2}\}$, we have
	\begin{equation}\label{eq: Contra_Delta_p}
		\begin{aligned}
			\phi(\bm{x}^{k};\bm{x}^{k}) -  \phi(\hat{\bx}^k;\bm{x}^{k})
			&=  \phi(\bm{x}^{k};\bm{x}^{k}) - \phi(\bx^*; \bx^*) + \phi(\bx^*; \bx^*) - \phi(\hat{\bx};\bm{x}^{*})\\
			&\quad + \phi(\hat{\bx};\bm{x}^{*}) - \phi(\hat{\bx}^k;\bm{x}^*) + \phi(\hat{\bx}^k;\bm{x}^*) -  \phi(\hat{\bx}^k;\bm{x}^{k})\\
			&\geq - |\phi(\bx^k; \bx^k) - \phi(\bx^*; \bx^*)| + |\phi(\bx^*; \bx^*) - \phi(\hat{\bx};\bm{x}^{*})|\\
			&\quad -|\phi(\hat{\bx}^k;\bm{x}^*) - \phi(\hat{\bx};\bm{x}^{*})| - \phi(\hat{\bx}^k;\bm{x}^*) - \phi(\hat{\bx}^k;\bm{x}^{k})\\
			&> -\delta / 4 + \delta -\delta / 4 - \delta / 4= \delta / 4,
		\end{aligned}
	\end{equation}
	which contradicts~\eqref{delta is small}. By~\eqref{eq: Contra_Delta_p}, $\hat{\bx}^k$ is feasible for the $k$th subproblem~\eqref{sub.1} and has lower objective than $\bm{x}^{k+1}$, and this contradicts the fact that $\bm{x}^{k+1}$ is optimal to~\eqref{sub.1}. Therefore, by~\Cref{lem.well.2}(a), $\bm{x}^{*}$ is first-order optimal for \eqref{lp.prob}.

	Case (ii): we consider two subcases that $\|\bx^*\|_p^p < r$ and $\|\bx^*\|_p^p = r$. 
	
	(ii)-(a): Suppose $\|\bx^*\|_p^p < r$ with   $\epsilon^*= c\Big(\tfrac{ r -  \|\bx^*\|_p^p  }{|\Acal(\bx^*)| + 1}\Big)^{\frac{1}{p}} > 0$. By \Cref{lem.well.2}(a), we can assume by contradiction that  the subproblem \eqref{sub.2}  at $\bx^*$ has 
	a stationary point $\tilde{\bm{x}} \ne \bx^*$ satisfying 
	\begin{equation}\label{eq: Delta_p_2}
		\Delta  \phi(\tilde{\bm{x}}; \bx^*) = \phi(\bx^*; \bx^*) - \phi(\tilde{\bm{x}}; \bx^*) \ge \delta > 0.
	\end{equation}
	Now consider subsequence $\hat{\Scal}_2 \subset \Scal_2$ such that $\{\bm{x}^{k}\}_{k \in \hat{\Scal}_2}\to \bm{x}^{*}$  and $\{\epsilon^{k}\}_{k \in \hat{\Scal}_2} \to \epsilon^*$. By \Cref{lem.delta.0}, for sufficiently large $k\in\hat{\Scal}_2$, we have
	\begin{equation}\label{eq: delta_p2_xk}
		\phi(\bm{x}^{k+1};\bm{x}^{k}) > \phi(\bm{x}^{k};\bm{x}^{k}) - \delta/4.
	\end{equation}
	Notice that $\phi(\bx^k; \bx^k) - \phi(\bx^*; \bx^*)  = f(\bx^k) - f(\bx^*)$. By the continuity of $f$, there exists sufficiently large $k \in \hat{\Scal}_2$ such that 
	\begin{equation}\label{eq: Delta_p2_xk}
		|\phi(\bx^k; \bx^k) - \phi(\bx^*; \bx^*)|  < \delta/4.
	\end{equation}
	Now consider projecting $\tilde{\bm{x}}$ onto the feasible region of~\eqref{sub.2} at $\bx^k$, and   denote the projection point as $\tilde{\bm{x}}^{k}$. By the continuity of $p$, there exists sufficiently large $k \in \hat{\Scal}_2$ such that 
	\begin{equation}\label{eq: Delta_tilde_xk}
		\begin{aligned}
			|\phi(\tilde{\bx}^k;\bm{x}^*) - \phi(\tilde{\bx}^k;\bm{x}^{k})| < \delta / 4\quad\text{and}\quad |\phi(\tilde{\bx}^k;\bm{x}^*) - \phi(\tilde{\bx};\bm{x}^{*})| < \delta/4.
		\end{aligned}
	\end{equation}
	Combining~\eqref{eq: Delta_p_2},~\eqref{eq: delta_p2_xk},~\eqref{eq: Delta_p2_xk} and~\eqref{eq: Delta_tilde_xk}, for sufficiently large $k\in\hat{\Scal}_2$, we have
	\begin{equation}\label{eq: ii_conclusion1}
		\begin{aligned}
			\phi(\bm{x}^{k};\bm{x}^{k}) -  \phi(\tilde{\bx}^k;\bm{x}^{k})
			&=  \phi(\bm{x}^{k};\bm{x}^{k}) - \phi(\bx^*; \bx^*) + \phi(\bx^*; \bx^*) - \phi(\tilde{\bx};\bm{x}^{*})\\
			&\quad + \phi(\tilde{\bx};\bm{x}^{*}) - \phi(\tilde{\bx}^k;\bm{x}^*) + \phi(\tilde{\bx}^k;\bm{x}^*) -  \phi(\tilde{\bx}^k;\bm{x}^{k})\\
			&\geq - |\phi(\bx^k; \bx^k) - \phi(\bx^*; \bx^*)| + |\phi(\bx^*; \bx^*) - \phi(\tilde{\bx};\bm{x}^{*})|\\
			&\quad -|\phi(\tilde{\bx}^k;\bm{x}^*) - \phi(\tilde{\bx};\bm{x}^{*})| - \phi(\tilde{\bx}^k;\bm{x}^*) - \phi(\tilde{\bx}^k;\bm{x}^{k})\\
			&> -\delta / 4 + \delta -\delta / 4 - \delta / 4= \delta / 4,
		\end{aligned}
	\end{equation}
	contradicting~\eqref{eq: delta_p2_xk}. This indicates that $\tilde{\bm{x}}^{k}$ is  feasible for the $k$th subproblem~\eqref{sub.2} and has lower objective than $\bm{x}^{k+1}$. Obviously, this contradicts the optimality of $\bm{x}^{k}$ for~\eqref{sub.2} at the $k$th iteration. Therefore, by \Cref{lem.well.2}(b), $\bm{x}^{*}$ is first-order optimal for \eqref{lp.prob}.

	(ii)-(b): Suppose $\|\bx^*\|_p^p = r$ with   $\epsilon^*= c\Big(\tfrac{ r -  \|\bx^*\|_p^p  }{|\Acal(\bx^*)| + 1}\Big)^{\frac{1}{p}}= 0$. We know $\Ical(\bx^*) \ne \emptyset$.    Consider subsequence $\hat{\Scal}_2$ such that $\{\bm{x}^{k}\}_{k \in \hat{\Scal}_2} \to \bm{x}^{*}$.  
	By \eqref{lem. iterates_dimishes},  for any $i\in\Ical(\bx^*)$,  there exists $\bar{k} \in \hat{\Scal}_2$ such that $\{x_{i}^{k} \}_{k \geq \bar{k}, k\in\hat{\Scal}_2}$ and $\{x_i^{k+1}\}_{k \geq \bar{k}, k\in\hat{\Scal}_2}  $ are bounded away from 0, meaning $\Ical(\bm{x}^{*}) \subset \Ical(\bm{x}^{k})$ for  sufficiently 
	large  $k\in\hat{\Scal}_2$.   By~\eqref{kkt.sub2}, we have
	\begin{equation*}
		\begin{aligned}
			\lambda^{k+1}&=-\frac{  x_i \nabla_i f(\bm{x}^k)+\beta  x_i (x_i^{k+1}-x_i^k)  }{ p | x_i^k|^{p-1}|x_i^{k+1}|  },  \  i\in\Ical(\bx^*). 
			%			&\le \frac{\sum_{i\in\Ical(\bm{x}^{k})}|x_i^{k+1}\nabla_i f(\bm{x}^k) | +\beta\sum_{i\in\Ical(\bm{x}^{k})}  |x_i^{k+1} (x_i^{k+1}-x_i^k)|}{\sum_{i\in \Ical(\bm{x}^{k})}| x_i^k|^{p-1}|x_i^{k+1}|}\\
			%			&\le \frac{ Cr^{1/p}+ 2\beta   r^{2/p}  }{|\Ical(\bm{x}^{k})|r} \leq \frac{ Cr^{1/p}+ 2\beta   r^{2/p}  }{r},
		\end{aligned}
	\end{equation*} 
	Obviously,  $\{ \lambda^{k+1} \}_{k \geq \bar{k}, k\in\hat{\Scal}_2}$ are bounded above.  Let $\lambda^*$ be a limit point of $\{\lambda^{k+1}\}$ with
	subsequence   $\{\lambda^{k+1}\}_{\tilde{\Scal}_2} \to \lambda^*$.
	By \Cref{lem. iterates_dimishes},  $\{\bx^{k+1}\}_{\tilde{\Scal_2}}\to\bx^*$.   
	It follows that for any $i\in \Ical(\bx^*)$,  
	\begin{equation}\label{eq: satisfy_optimality} 
		\begin{aligned} 
			0 = 	\nabla_i f(\bx^{*}) x_{i}^{*}+\lambda^{*} p |x_i^{*}|^{p}
			=  \lim_{k\in \tilde{\Scal}_2 \atop k\to\infty}
			\nabla_i f(\bx^k) x_i^{k+1} +\lambda^{k+1} p |x_i^{k+1} |^{p}.
		\end{aligned} 
	\end{equation}
	This proves that $\bm{x}^{*}$ is first-order optimal for \eqref{lp.prob}.
\end{proof}

\section{Numerical Experiments}\label{Sec: Numerical Study}
In this section, we test the proposed algorithm IR1B on synthetic data and real-world data to demonstrate the practicability and effectiveness for solving~\eqref{lp.prob}. With this target, we test the sparse signal recovery problem that stems from compressive sensing and also test the logistic regression problem in statistical machine learning. Of all the tests, we choose the $\ell_0$ ball or the $\ell_1$ ball constraints to apply to the same testing problems as the benchmark to show the quality of the solutions of these problems,  since the existing approach for solving~\eqref{lp.prob} is limited. We implement all codes using Python and run all experiments on a laptop under Ubuntu with 7.5 GB main memory and Intel Core i7-7500U processor (2.70GHz $\times$ 4). 
\subsection{$\ell_{p}$-constrained Least squares on synthetic data}
Consider a underdetermined sensing matrix $\bm{A} \in \mathbb{R}^{m\times n}$ and a noisy measurement vector $\bm{y}\in\mathbb{R}^{m}$. The unknown signal $\bm{x}^{\dagger}$ with $d$-nonzeros to be estimated satisfies $\bm{y} = \bm{A}\bm{x}^{\dagger}$. Then the $\ell_{p}$-constrained signal recovery problems can be formulated as
\begin{equation}\label{eq:recovery_problem}
	\begin{aligned}
		\min_{\bm{x}\in\mathbb{ R}^n} \quad&\frac{1}{2}\|\bm{A}\bm{x}-\bm{y}\|_2^2\\
		\text{s.t.} \quad&\|\bm{x}\|_p^p\le r,
	\end{aligned}
\end{equation}
where $p\in(0,1]$. In particular, the plain Iterative Hard Thresholding (IHT)~\cite{blumensath2009iterative} algorithm and the method described in~\cite{zhang2020inexact} (abbreviated as GPM) are well-known for solving~\eqref{eq:recovery_problem} when $p = 0$ and $p = 1$, respectively. 

We first state the way to generate simulation data. Concretely,  $\bm{x}_{\text{ori}}$ represents the original  $d$-sparse vector. In this context, $m$ is the number of noisy measurements, and we set $m \in [50,1000]$ and increase it by $50$ for each comparison. Given each $m$, we randomly generate $d$ nonzero elements of $\bm{x}_{\text{ori}}$ by assigning their value with $+1$ and $-1$ with equal probability, and each entry of $\bm{A}$ is sampled from a standard Gaussian distribution. Then we form $\bm{b}=\bm{A}\bm{x}_{\text{ori}}+\bm{\eta}$ with $\bm{\eta}$ being Gaussian noise with $0$ mean and $10^{-2}$ standard deviation.

In this experiment, we set $c = 0.95$, $r=s$ and initialize $\bm{x}^{0} = 0.9(\frac{d}{\Vert\bm{\nu}\Vert_1}\bm{\nu})^{1/p}$ such that $\Vert\bm{x}\Vert_{p}^{p}\leq r$, where each entry of $\bm{\nu}\in\mathbb{R}^{n}$ are uniformly sampled over the interval $[0,1)$. The stepsize $1/\beta$ for IR1B and $\beta$ for IHT and GPM is used, where $\beta = 1.1L$, $L$ represents the Lipschitz constant of the objective in~\eqref{eq:recovery_problem} and $\lambda_{\text{max}}(\bm{A}^{T}\bm{A})$ denotes the largest eigenvalue of $\bm{A}^{T}\bm{A}$. For IR1B,  we determine that a point is on the boundary of the $\ell_p$ ball if $|r-\|\bm{x}\|_p^p |\le 10^{-8}$. IR1B, IHT and GPM are terminated if $\|\bm{x}^{k+1}-\bm{x}^k\|_2\le \text{tol}$ with $\text{tol} = 10^{-5}$. In addtion, we declare a success for the test if $\|\bm{x}^*-\bm{x}^{\dagger}\|_2/\|\bm{x}^{\dagger}\|_2 < 10^{-3}$ is satisfied, where $\bm{x}^{*}$ denotes the optimal solution output by three algorithms. With the different values of $p$, we compare three algorithms with respect to the empirical probability of success defined by the ratio of the number of successes to the total $50$ runs for each comparison. The curve is shown in~\Cref{fig:empirical_success}, and each presented result is the average of $50$ independent runs.
\begin{figure}[htbp]
	\centering
	\includegraphics[width=3.6in]{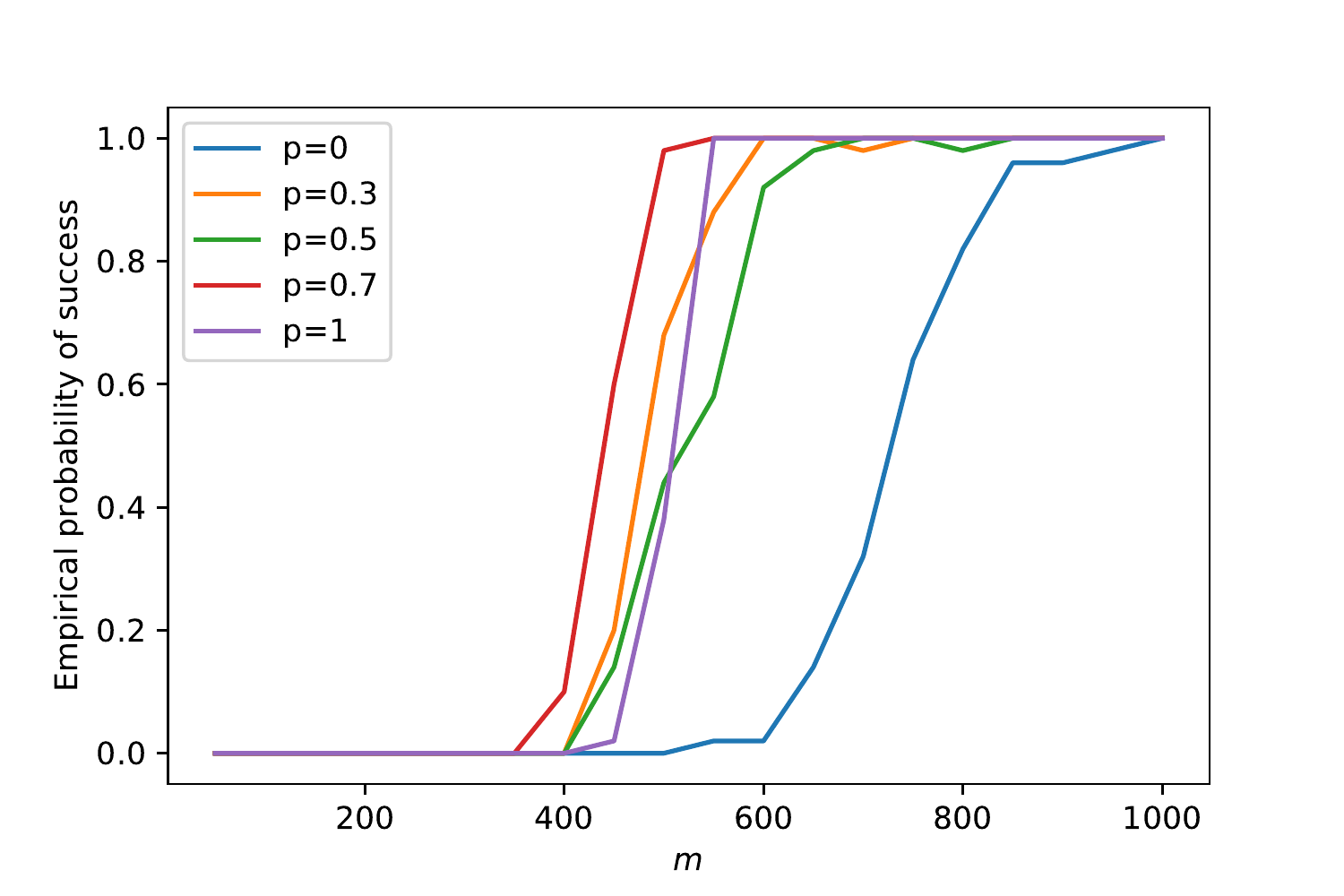}
	\caption{The empirical probability of success versus $m$ for various $\ell_{p}$ ball constraints.}
	\label{fig:empirical_success}
\end{figure}

%\begin{figure}[htbp]
%	\centering
%	\includegraphics[width=3.6in]{Empirical_success_var_s.png}
%	\caption{The empirical probability of success versus $m$ with respect to differen $d$ nonzeros. }
%	\label{fig:empirical_success_var}
%\end{figure}

As observed in~\Cref{fig:empirical_success}, with $p \in \{0.3,0.5,0.7\}$, IR1B successfully solves~\eqref{eq:recovery_problem}. In particular, we can see that $\ell_{0.7}$ ball has $5$ successful recoveries and requires the minimal number of observations, i.e., $m = 400$. On the other hand, we observe that $\ell_{0.3}$ ball and $\ell_{0.5}$ ball yield more successful recoveries than $\ell_0$ ball and $\ell_1$ ball when $m = 450$. When $m > 600$, there are a few successful recoveries for $\ell_0$ ball while other $\ell_{p}$ balls achieve success for each run. Overall, the $\ell_p$ ball with $p \in (0,1)$ is superior to the $\ell_0$ ball and $\ell_1$ ball for recovering the original sparse signal if the number of observation elements is limited. 

%On the other hand, we further explore the factors that influence the success rate of signal recovery. We design experiments to explore the influence of the number of non-zero elements of the original signal. As shown in \cref{fig:empirical_success_var}, when there are more non-zero elements in the original signal $\hat{\bm{x}}$, we must observe more values in order to obtain the same success rate, which means that $m$ needs to be larger. If we want the same signal recovery success rate, the growth of $m$ is roughly linear with the growth of the number of non-zero elements of the original signal. 

\subsection{$\ell_{p}$-constrained logistic regression on real-world data}
The focus of this test is to consider the logistic regression model which is a popular classification approach in the context of supervised learning. Given a collection of data pairs $\{(\bm{x}^{(i)},y^{(i)})\}_{i=1}^{m}$, where $\bm{x}^{(i)} \in \mathbb{R}^{n}$ represents a feature vector and $y^{(i)}\in \{-1,+1\}$ denotes a binary instance label. Then, the $\ell_{p}$ ball constrained logistic regression problem is 
\begin{equation}\label{eq: Logistic_model}
	\begin{aligned}
		\min_{\bm{\theta}}\quad &\sum_{i=1}^{n}  \log\left(  {1 + \exp(-\bm{\theta}^{T}\bm{x}^{(i)})}\right) \\
		\text{s.t.}\quad &\Vert\bm{\theta}\Vert_{p}^{p} \leq r.
	\end{aligned}
\end{equation}

We evaluate the performance of IR1B on the Wisconsin breast cancer dataset~\cite{blake1998uci}, which contains $569$ instances and $30$ features. For compactness, we denote the data matrix as $\bm{X} = (\bm{x}^{(1)}; \ldots; \bm{x}^{(m)})^{T}\in\mathbb{R}^{569\times 30}$. In this test, we randomly split $\bm{X}$ into the train subset $\bm{X}_{\text{train}}$ and test subset $\bm{X}_{\text{test}}$ where  the test subset size accounts for $40\%$ of the total data set size. 

Let $\bm{\theta}^{\dagger}$ be the optimal solution returned by IR1B and $\hat{\bm{y}}$ be the predicted class label, i.e., $\hat{y}^{(i)} = 1$ if $(1 +\exp(-\bm{\theta}^{T}\bm{x}^{(i)}))^{-1} \geq 0.5$; otherwise $\hat{y}^{(i)} = 0$. We initialize $\bm{\theta}^{0} = \bm{0}$ and set $\beta = 1.1L$, where $L = 0.25\lambda_{\text{max}}(\bm{X}^{T}\bm{X})$ represents the Lipschitz constant of the objective in~\eqref{eq: Logistic_model}, and all other parameters remain the same as the first experiment.

%In addition, we define the training error and testing error as
%\begin{equation}\label{eq: train_error}
%	E_{\text{train}} = \tfrac{1}{m}\Vert\hat{\bm{y}} - \bm{y}_{\text{train}}\Vert_1\quad\text{and}\quad E_{\text{test}} = \tfrac{1}{m}\Vert\hat{\bm{y}} - \bm{y}_{\text{test}}\Vert_1,
%\end{equation} 
%respectively. 

We plot the curve of two errors versus a range of values of $r$ varying from $2$ to $35$, as shown in~\Cref{fig:test_acc}.

\begin{figure}[htbp]
	\centering
	\includegraphics[width=3.6in]{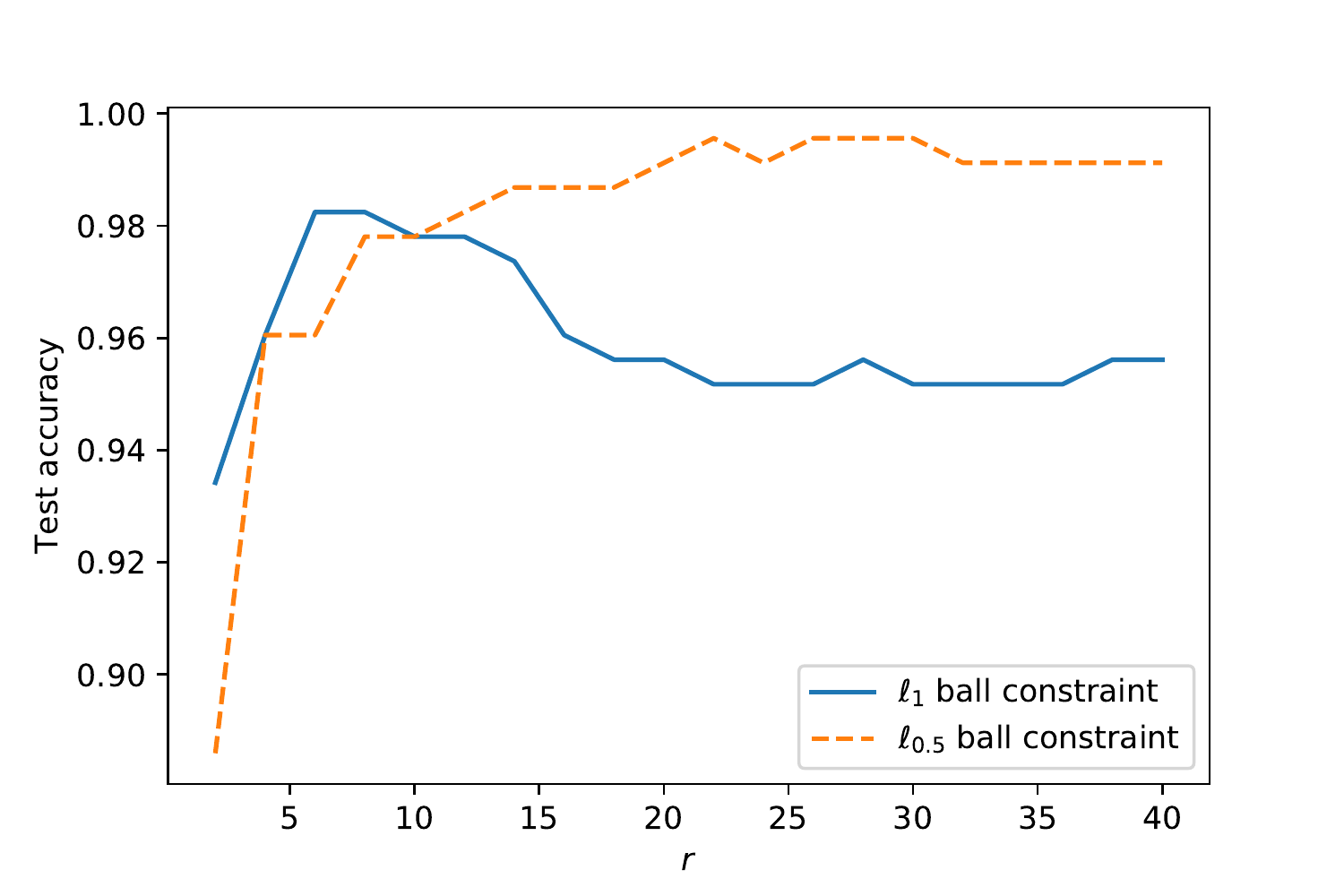}
	\caption{The empirical probability of success versus $m$ for   $\ell_{0.5}$ ball and $\ell_1$ ball  constraints.}
	\label{fig:test_acc}
\end{figure}

\Cref{fig:test_acc} illustrates the prediction accuracy on the test data for $\ell_{0.5}$ ball constraint and $\ell_1$ ball constraint. As observed, increasing $r$ gradually improves the prediction accuracy; we can see that $\ell_{p}$ ball constraint yields a larger prediction accuracy over the test data than that of $\ell_1$ ball constraint. These observations suggest that the $\ell_{p}$ ball constraint can achieve   better generalization error than the $\ell_1$ ball constraint.

\vfill

\section{Conclusions}\label{Sec: Conclusion}
In this paper, we proposed and analyzed   the $\ell_p$ ball constrained nonlinear optimization problems. The proposed iteratively reweighted $\ell$ ball method is simple to implement and only needs to solve a weighted $\ell_1$ ball projection problem. We proved the proposed the global convergence of our proposed algorithm  to the first-order stationary point of the original problem from any feasible initial point. The effectiveness of the proposed algorithm was demonstrated on the sparse signal recovery problems and logistic regression problems.

\appendix
\section{Appendix:  Proof of~\cref{thm:FirstOrderNecessaryCondition}: Characterizing the elements of the Fr\'echet normal cone}\label{Appendix_1}
\begin{proof}
	This proof is an direct extension of~\cite[Proposition 2.4]{yang2021towards} for a general objective function $f$. For any $\bm{x}\in \Omega$ and $\bm{x}$ sufficiently close to $\bar{\bm{x}}$, we have
	\begin{equation*}%\label{eq:firstorder}
		\begin{aligned}
			0 &\ge \sum_{i=1}^{n}|x_i|^p-\sum_{i=1}^{n}|\bar{x}_i|^p = \sum_{i\in \mathcal{A}(\bar{\bm{x}})}(|x_i|^p-0)+\sum_{i\in \mathcal{ I }(\bar{\bm{x}})}(|x_i|^p-|\bar{x}_i|^p)\\
			&=\sum_{i\in \mathcal{A}(\bar{\bm{x}})}|{x}_i|^{p-1}\text{sgn}(x_i)x_i+\sum_{i\in \mathcal{ I }(\bar{\bm{x}})}p|\bar{x}_i|^{p-1} (| x_i | - | \bar{x}_i|)+o(\| |\bm{x}_{\Ical(\bar{\bm{x}})}| - |\bar{\bm{x}}_{\Ical(\bar{\bm{x}})}|\|_2)\\
			&\ge  \sum_{i\in \mathcal{A}(\bar{\bm{x}})}(|{x}_i|^{p-1}\text{sgn}(x_i)-\eta_{i})x_i+\sum_{i\in \mathcal{A}(\bar{\bm{x}})}\eta_{i}x_i\\ &\quad+\sum_{i\in \mathcal{ I }(\bar{\bm{x}})}p|\bar{x}_i|^{p-1}\text{sgn}(\bar{x}_i)(x_i-\bar{x}_i)+o(\| |\bm{x}_{\Ical(\bar{\bm{x}})}| - |\bar{\bm{x}}_{\Ical(\bar{\bm{x}})}|\|_2),
		\end{aligned}
	\end{equation*}
	where the second equality is obtained by the Taylor series approximation of $|x_i|^p$ at $\bar{x}_i$ and the second inequality is 
	by the convexity of $|\cdot |$. 
	It then follows that 
	\begin{equation*}
		\begin{aligned}
			\frac{\langle\bm{\eta},\bm{x}-\bm{\bar{x}}\rangle}{\|\bm{x}-\bar{\bm{x}}\|_2}
			&=\frac{\sum\limits_{i\in \mathcal{A}(\bar{\bm{x}})}\eta_{i}x_i+\sum\limits_{i\in \mathcal{I}(\bar{\bm{x}})}p|\bar{x}_i|^{p-1}\text{sgn}(\bar{x}_i)(x_i-\bar{x}_i)}{\|\bm{x}-\bar{\bm{x}}\|_2}\\
			&\le \frac{\sum\limits_{i\in \mathcal{A}(\bar{\bm{x}})}(\eta_{i}-|{x}_i|^{p-1}\text{sgn}(x_i))x_i\!+\!o(\||\bm{x}_{\mathcal{I}(\bar{\bm{x}})}|-|\bar{\bm{x}}_{\Ical(\bar{\bm{x}})}|\|_2)}{\|\bm{x}-\bar{\bm{x}}\|_2}\\
			&\le\frac{\sum\limits_{i\in \mathcal{A}(\bar{\bm{x}})}(\eta_{i}\text{sgn}(x_i)-|{x}_i|^{p-1})|x_i|}{\|\bm{x}-\bar{\bm{x}}\|_2}+\frac{o(\|\bm{x}_{\Ical(\bar{\bm{x}})}-\bar{\bm{x}}_{\Ical(\bar{\bm{x}})}\|_2)}{\|\bm{x}_{\Ical(\bar{\bm{x}})}-\bar{\bm{x}}_{\Ical(\bar{\bm{x}})}\|_2}.
		\end{aligned}
	\end{equation*}
	As $x\to \bar{x}$, we have $\frac{o(\|\bm{x}_{\Ical(\bar{\bm{x}})}-\bar{\bm{x}}_{\Ical(\bar{\bm{x}})}\|_2)}{\|\bm{x}_{\Ical(\bar{\bm{x}})}-\bar{\bm{x}}_{\Ical(\bar{\bm{x}})}\|_2}\rightarrow0$. 
	Furthermore, we have $\eta_{i}\text{sgn}(x_i)-|{x}_i|^{p-1}<0$ for all $i\in \mathcal{A}(\bar{\bm{x}})$. 
	It follows that 
	$\limsup\limits_{\bx\rightarrow \bar{x}, x\in \Omega}\frac{\langle\bm{\eta},\bm{x}-\bm{\bar{x}}\rangle}{\|\bm{x}-\bar{\bm{x}}\|_2}\le 0$, completing the proof.
\end{proof}
%=======================================================================

%\bibliographystyle{plain}
%-------------------------------------------------------------------------
\bibliographystyle{ieeetr}
\bibliography{references}
%\bibliographystyle{ieee}
%\bibliography{egbib}

\end{document}